\newtheorem{lemma}{Lemma}
\newtheorem{theorem}{Theorem}
\newtheorem{definition}{Definition}
\newtheorem{conjecture}{Conjecture}
\newtheorem{corollary}{Corollary}
\newcommand{\nR}{\mathbb{R}}
\newcommand{\nA}{\mathcal{A}}
\newcommand{\nB}{\mathcal{B}}
\newcommand{\nC}{\mathcal{C}}
\newcommand{\nF}{\mathcal{F}}
\newcommand{\nG}{\mathcal{G}}
\newcommand{\nH}{\mathcal{H}}
\newcommand{\nI}{\mathcal{I}}
\newcommand{\bbZ}{\mathbb{Z}}
\newcommand{\ra}{\rightarrow}
\newcommand{\PP}[1]{\mathbb{P}\big[ #1 \big] }
\newcommand{\EE}[1]{\mathbb{E}[ #1 ] }
\newcommand{\EEbb}[1]{\mathbb{E}\Big[ #1 \Big] }
\newcommand{\EEd}[2]{\mathbb{E}_{#2}\left[ #1 \right] }
\newcommand{\EEdnp}[2]{\mathbb{E}_{#2} #1 }
\newcommand{\Ind}[1]{\mathbb{I}\left[ #1 \right] }
\newcommand{\norm}[1]{\left\lVert #1 \right\rVert}
\newcommand{\abs}[1]{\left| #1 \right|}
\newcommand{\mrisk}[1]{R_{mar}(h)}
\newcommand{\hmrisk}[1]{\hat{R}_{mar}(h)}
\newcommand{\binary}{\lbrace 0, 1 \rbrace}
\newcommand{\graph}{\mathcal{G}}
\newcommand{\e}[1]{\exp( #1 )}
\def\ie{{i.e}\onedot}
\DeclareRobustCommand\onedot{\futurelet\@let@token\@onedot}
\def\@onedot{\ifx\@let@token.\else.\null\fi\xspace}
\def\eg{{e.g}\onedot} 
\def\ie{{i.e}\onedot}
\newcommand{\subalign}[1]{%
  \vcenter{%
    \Let@ \restore@math@cr \default@tag
    \baselineskip\fontdimen10 \scriptfont\tw@
    \advance\baselineskip\fontdimen12 \scriptfont\tw@
    \lineskip\thr@@\fontdimen8 \scriptfont\thr@@
    \lineskiplimit\lineskip
    \ialign{\hfil$\m@th\scriptstyle##$&$\m@th\scriptstyle{}##$\crcr
      #1\crcr
    }%
  }
}
\newcommand{\seqsep}[1]{\vec{\alpha}(#1)}
\begin{document}

\begin{frontmatter}

\title{Dependency-dependent Bounds for Sums of Dependent Random Variables}

\runtitle{Dependency-dependent Bounds}

\begin{aug}
\author{\fnms{Christoph H.} \snm{Lampert}\thanksref{a}\ead[label=e2]{chl@ist.ac.at}},
\author{\fnms{Liva} \snm{Ralaivola}\thanksref{b}\ead[label=e3]{liva.ralaivola@lif.univ-mrs.fr}}
\and
\author{\fnms{Alexander }\snm{Zimin}\thanksref{a}\corref{}\ead[label=e1]{azimin@ist.ac.at}}
\address[a]{IST Austria, Am Campus 1, Klosterneuburg, 3400, Austria. \printead{e2,e1};}
\address[b]{Laboratoire d’Informatique Fondamentale de Marseille, Parc Scientifique et Technologique de Luminy, 163, avenue de Luminy - Case 901, F-13288 Marseille Cedex 9. \printead{e3}}

\runauthor{C.H. Lampert, L. Ralaivola and A. Zimin}

\affiliation{???}

\end{aug}

\begin{abstract}
We consider the problem of bounding large deviations for non-i.i.d. random variables that are allowed to have arbitrary dependencies. 
Previous works typically assumed a specific dependence structure, namely the existence of independent components.
Bounds that depend on the degree of dependence between the observations have only been studied in the theory of mixing processes, where variables are time-ordered.
Here, we introduce a new way of measuring dependences within an unordered set of variables. 
We prove concentration inequalities, that apply to any set of random variables, but benefit from the presence of weak dependencies.
We also discuss applications and extensions of our results to related problems of machine learning and large deviations.
\end{abstract}

\begin{keyword}
\kwd{large deviations bounds}
\kwd{dependent data}
\end{keyword}



\end{frontmatter}

\section{Introduction}

For a set $\nA = \lbrace X_1, \dots, X_n \rbrace$ of a random variables, we study the concentration of their mean, $\frac{1}{n} \sum_{i=1}^{n} X_i$.
When the variables are independent, this is a well-studied topic with numerous results, \eg see \citep{boucheron2013concentration}.
For example, when $0 \leq X_i \leq 1$ for each $i=1,\dots,n$, Hoeffding's inequality~\citep{hoeffding1963probability} 
provides the following bound on the deviations of the sample mean from its expectation. For any $t > 0$:
\begin{equation}
\PP{\frac{1}{n} \sum_{i=1}^{n} X_i - \frac{1}{n} \sum_{i=1}^{n} \EE{X_i} > t} \leq \e{-2n t^2}.
\end{equation}
However, once we alleviate the independence assumption, the situation becomes more complicated. 
A lot of existing research studies 
the case of time series, \ie stochastic processes with integers as index set, meaning that there is a natural ordering inside $\nA$.
For example, in \citep{bosq2012nonparametric} it is shown that if $0 \leq X_i \leq 1$, then 
for any integer $\mu \in [1, \frac{n}{2}]$ and any $t > 0$ with $\nu = \lfloor \frac{n}{2\mu} \rfloor$:
\begin{equation}\label{eq:bosqbound}
\PP{\frac{1}{n}\sum_{i=1}^n X_i - \frac{1}{n} \sum_{i=1}^{n} \EE{X_i} > t} \leq 4 \e{-\frac{\mu t^2}{8}} + 22\mu \alpha_{\nu}\sqrt{1+\frac{4}{t}},
\end{equation}
where $\alpha_{\nu}$ are $\alpha$-mixing coefficients of the process (a definition is given in Section \ref{sec:mixing}).
An important feature of this bound is that it reflects the strength of the dependence between the variables as measured by mixing coefficients.
Unfortunately, this result applies only to stochastic processes, 
while there are a lot of cases when the dependent variables do not have a 
natural ordering, such as the Ising model \citep{ising1925}, where they are 
distributed spatially.

For general sets of random variables, most of the existing concentration results require the existence 
of independent components within $\nA$. 
For example, the following inequality follows from Theorem~2.1 in \citep{Janson01}. For any $t > 0$:
\begin{equation}
\PP{\frac{1}{n} \sum_{i=1}^{n} X_i - \frac{1}{n} \sum_{i=1}^{n} \EE{X_i} > t} \leq \e{-\frac{2n t^2}{\chi(\nA)}},
\end{equation}
where $\chi(\nA)$ is the coloring number of the dependency graph of $\nA$ (see Section~\ref{sec:concentration}).
A shortcoming of this bound, however, is that the dependency graph uses only information about 
the independence of variables, but is oblivious to the strength of any existing dependencies.
%
%
%
%
As it was noticed in \citep{Janson01}, results that ignore this additional information \emph{"can be expected to be wasteful and not give optimal results when the dependencies that exist are weak"}.

Our work combines the best features of these existing approaches.
We prove bounds that apply to general sets of random variables and, at the same time, adjust to the strength of the dependencies between them.

Our first contribution is a suitable definition of a measure of dependence for a set of random variables that controls how closely their 
joint distribution is approximated by the product distribution with the same marginals. 
The tightness of this approximation and its relation to the dependency measure is the content of our central result, Theorem~\ref{theorem:core} (Approximation Theorem) that we prove in Section \ref{sec:corelemma}.
Based on this result, we then prove two new concentration inequalities, stated as Theorem~\ref{theorem:maincovering} and \ref{theorem:mainfractional} that, for example, yield the following bound. 
%
If $0 \leq X_i \leq 1$, then for any $t>0$ and any $\gamma\geq 0$:
\begin{equation}
\PP{\frac{1}{n}\sum_{i=1}^n X_i - \frac{1}{n} \sum_{i=1}^n\EE{X_i} > t } \leq \e{-\frac{nt^2}{8\chi_\gamma(\nA)}} + 18n\gamma\sqrt{\frac{2}{t}},
\end{equation}
where $\gamma$ is an upper bound on the allowed weak dependencies and $\chi_\gamma(\nA)$ is the coloring number 
of the thresholded dependency graph (the graph of all dependencies that exceed $\gamma$). 
Further, we show additional consequences of the Approximation Theorem and apply the obtained results to concrete examples of dependent random variables.

The rest of this paper is organized as follows. 
We review related work in Section~\ref{sec:relatedwork}.
In Section~\ref{sec:corelemma} we introduce the dependency measure and prove the Approximation Theorem.
In Section~\ref{sec:concentration} we go through the background on proper covers for sets of random variables, introduce 
a new notion of soft covers, and prove the concentration bounds.
Some additional consequences of the Approximation Theorem are presented in Section~\ref{sec:consequences}.
We complete the paper with applications of the derived concentration bounds to mixing processes, 
independent cascade models and lattice models in Section~\ref{sec:applications}.

\section{Related work}\label{sec:relatedwork}
As discussed in the introduction, two directions of research are 
most relevant for our result: concentration bounds for stochastic 
processes and concentration bounds based on independent groups
of variables. 

An independent block technique, which goes back to \citep{bernstein1927}, was introduced for stochastic processes in \citep{YuBin01} using $\beta$-mixing coefficients as a measure of dependence.
%
%
Other notions of mixing were also considered, for example, $\eta$-mixing 
in \citep{kontorovich2008concentration} or $\nC$-mixing in \citep{hang2015bernstein}.
Among these works, the most relevant to us are \citep{vidyasagar2005convergence,han2016}, where the authors prove concentration inequalities for exponentially $\alpha$-mixing processes, and, especially, \citep{bosq2012nonparametric}, where an additive bound \eqref{eq:bosqbound} with no restrictions on the rate of mixing is established. 
Mixing coefficients are not the only measure of dependence for stochastic processes, though. For example, in~\citep{Paulin01} 
the author uses a pseudo spectral gap to show the concentration of Markov chains. 

The existence of independent components is a widely used assumption in the literature. 
For example, Stein's method~\citep{stein1986approximate} can be applied to problems 
with dependencies \citep{barbour1989central, ross2011fundamentals}, 
and also to prove concentration \citep{chatterjee2007stein, chatterjee2010applications}.
In \citep{schmidt1995chernoff} the authors used an assumption of $k$-wise independence, where one assumes 
independence only for subsets of size less than $k$, and proved Chernoff-Hoeffding type 
bounds.
This was further relaxed in \citep{impagliazzo2010constructive}, where it was assumed that the expectation of the product of variables of each subset is exponentially small in the size of the corresponding subset.
In \citep{Janson01} an inequality of Hoeffding's type is proved relying on 
the covering properties of the dependence graph. 
As we pointed out in the introduction, the main limitation of these works is that 
they treat dependencies in a binary way, either present or not, being oblivious to 
the strength of dependencies.
A notable exception is \citep{dhurandhar2013auto}, where a bound is shown that depends on some 
measure of dependence within each independent component under certain parametric conditions
on the distribution.
Note that our work is orthogonal to this: we focus on the dependence between components 
and we do not require additional conditions on the distribution.

%

%
A related approach to ours was considered in \citep{feray2016ising} and \citep{feray2016weighted} 
to prove central limit theorems.
The author extended the definition of the dependency graph to weighted dependency graphs 
and used the bounds on the cumulants of the variables as a dependency condition for variables.
%

\section{Approximation Theorem}\label{sec:corelemma}
In this section we state and prove a new approximation theorem for dependent random variables. 
%
%
First, we remind the reader of the notion of $\alpha$-dependence~\citep{Bradley01}.

%
%
\begin{definition}[$\alpha$-dependence] 
Let $(\Omega, \nF, \mathbb{P})$ be a probability space.
Given two sigma algebras $\nB$ and $\nC$ belonging to $\nF$, the \emph{$\alpha$-dependence} coefficient between them is defined as 
\begin{equation}
\alpha(\nB| \nC) = \sup_{B\in\nB, C\in\nC} \abs{\PP{B \cap C} - \PP{B}\PP{C}}.
\end{equation}
Consequently, the $\alpha$-dependence coefficient between two random variables $X$ and $Y$ is defined as 
\begin{equation}
\alpha(X| Y) = \alpha(\sigma(X)|\sigma(Y)),
\end{equation}
where $\sigma(\cdot)$ denotes the $\sigma$-algebra generated by the corresponding random variable.
\end{definition}

Based on this, we introduce a new notion of $\alpha$-separation of a set of random variables.

\begin{definition}[$\alpha$-separation]
The \emph{$\alpha$-separation coefficient} of a set of random variables $\nA = \lbrace X_1, \dots, X_k \rbrace$ is
\begin{equation}\label{eq:seq-alpha-sep}
\seqsep{\nA} = \min_{\pi\in\Pi_k} \frac{1}{k} \sum_{i=1}^{k-1} \alpha(X_{\pi(i)} | \lbrace X_{\pi(i+1)}, \dots, X_{\pi(k)} \rbrace),
\end{equation}
where $\Pi_k$ is the set of all permutations of $1,\dots,k$.
\end{definition}
For examples of $\alpha$-separation, see Section~\ref{sec:applications}.

An important result that lies underneath all results in this paper is the following new theorem that allows us to drop all of the dependencies inside a set of random variables by constructing another set of independent random variables that have the same marginal distributions and approximate the values of the variables of the original set.
The precision of the approximation depends on the $\alpha$-separation coefficient introduced above.
\begin{theorem}[Approximation Theorem]\label{theorem:core}
For a set of random variables $\nA = \lbrace X_1, \dots, X_k \rbrace$ with each $a_i \leq X_i \leq b_i$, let $r(\nA) = \max_{1\leq i \leq n}\abs{b_i-a_i}$ be the maximum range on the variables in the set.
Then there exists another set of random variables $\nA^\star = \lbrace X_1^\star, \dots, X_k^\star \rbrace$ with the following properties:
\begin{enumerate}
\item each $X_i^\star$ has the same marginal distribution as $X_i$,
\item all variables inside $\nA^\star$ are independent,
\item\label{point:approx} For any $\lambda > 0$: $\PP{\frac{1}{k}\sum_{i=1}^{k}\abs{X_i - X_i^\star} > \lambda} \leq 18 k \seqsep{\nA} \sqrt{\frac{r(\nA)}{\lambda}}$.
\end{enumerate}
\end{theorem}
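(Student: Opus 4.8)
The plan is to construct $\nA^\star$ explicitly by sequentially coupling the original variables to independent copies, exploiting the permutation that achieves the minimum in the definition of $\seqsep{\nA}$. Without loss of generality, relabel the variables so that the identity permutation attains the minimum, i.e.\ $\seqsep{\nA} = \frac{1}{k}\sum_{i=1}^{k-1} \alpha(X_i \mid \{X_{i+1},\dots,X_k\})$. The key idea is that a small $\alpha$-dependence coefficient between $X_i$ and the block $\{X_{i+1},\dots,X_k\}$ means that the conditional distribution of $X_i$ given the later variables is, in a suitable sense, close to its marginal. I would process the variables from $X_k$ down to $X_1$: the variable $X_k$ can be taken to be independent already (its ``block'' is empty), and at each earlier step I want to replace the conditional dependence of $X_i$ on the future block by an independent draw from the marginal of $X_i$, while keeping the coupling tight enough that $|X_i - X_i^\star|$ is typically small.

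The central technical tool I would invoke is a quantitative coupling lemma translating the $\alpha$-dependence coefficient into control on $|X_i - X_i^\star|$. Concretely, I expect a statement of the form: if $\alpha(X \mid \nC) \le \alpha$ and $a \le X \le b$, then there is a random variable $X^\star$ with the marginal law of $X$, independent of the $\sigma$-algebra generated by $\nC$, such that $\PP{|X - X^\star| > \lambda}$ is bounded by something like $c\,\alpha\sqrt{(b-a)/\lambda}$ for a universal constant. The square-root and the factor $18$ in the theorem strongly suggest this is where the real work lies: one bounds the total-variation-type discrepancy between the joint and the product-of-marginals using the $\alpha$-coefficient, then converts this distributional closeness into a pointwise coupling bound via a quantile (monotone) coupling of the one-dimensional marginals. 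The $\sqrt{r(\nA)/\lambda}$ shape is characteristic of bounding $\PP{|X-X^\star|>\lambda}$ by relating it to an $L^1$ or $L^{1/2}$ distance between distribution functions and then applying Markov's inequality on the appropriate power; the $\alpha$-coefficient controls the integrated difference of the CDFs, which after a $\sqrt{\cdot}$ step and a union/summation over the $k$ variables produces the claimed bound.

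The order of steps I would carry out is: first fix the optimal permutation and set up the sequential (backward) coupling construction, defining $X_i^\star$ on a common probability space so that properties (1) marginals and (2) independence hold by construction. Second, prove the single-variable coupling lemma that turns $\alpha(X_i \mid \{X_{i+1},\dots,X_k\})$ into a tail bound on $|X_i - X_i^\star|$; this requires care because I am conditioning on the \emph{entire future block}, not a single variable, so the coupling must be done given the realized values of $X_{i+1},\dots,X_k$. Third, combine the per-variable bounds: by a union bound over the event $\{\frac{1}{k}\sum_i |X_i - X_i^\star| > \lambda\}$, or more precisely by controlling $\sum_i \PP{|X_i - X_i^\star| > \lambda}$ via Markov and the concavity of $\sqrt{\cdot}$, to arrive at the factor $k\,\seqsep{\nA}\sqrt{r(\nA)/\lambda}$ with the explicit constant $18$.

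The main obstacle I anticipate is the second step: establishing the clean quantitative coupling that converts an $\alpha$-dependence coefficient — which only controls $|\PP{B\cap C} - \PP{B}\PP{C}|$ over \emph{events}, a weak, supremum-over-rectangles notion — into a coupling with a pointwise bound on $|X_i - X_i^\star|$. Unlike $\beta$- or total-variation mixing, $\alpha$-mixing does not directly give a coupling, so I expect to need an auxiliary argument bounding the difference between the conditional CDF $\PP{X_i \le x \mid X_{i+1},\dots,X_k}$ and the marginal CDF $\PP{X_i \le x}$ in terms of $\alpha$, integrated appropriately, and then to use a monotone rearrangement (quantile transform) coupling so that small CDF discrepancies translate into small value discrepancies. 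Getting the dependence on $\lambda$ to come out as $\lambda^{-1/2}$ rather than $\lambda^{-1}$, and tracking the universal constant down to $18$, will require choosing the threshold in the CDF-comparison argument optimally (balancing the region where the CDFs are close against the residual mass), and this optimization is where I expect the bulk of the careful estimation to occur.
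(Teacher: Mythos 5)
Your overall plan coincides with the paper's proof in its main lines: reorder so that the identity permutation attains the minimum in \eqref{eq:seq-alpha-sep}, build the copies sequentially, invoke a single-variable coupling result, and finish with the union bound $\PP{\frac{1}{k}\sum_{i}\abs{X_i-X_i^\star}>\lambda}\leq\sum_i\PP{\abs{X_i-X_i^\star}>\lambda}$. The coupling lemma you conjecture is exactly Theorem~3 of Bradley (Theorem~\ref{theorem:bradley} in the appendix), with $q=\infty$ yielding precisely the constant $18$, the linear dependence on $\alpha$, and the $\sqrt{(b_i-a_i)/\lambda}$ factor you predicted; the paper cites it rather than reproving it, so the ``bulk of the careful estimation'' you anticipate is outsourced. (Incidentally, your sketch of how to prove that lemma --- pointwise comparison of conditional and marginal CDFs followed by a quantile coupling --- would not go through as stated, since $\alpha$ only controls event-wise discrepancies in an averaged sense, not conditional CDFs pointwise; but since you may simply cite Bradley, this is not the decisive issue.)

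The genuine gap is in property~(2). In your construction each $X_i^\star$ is made independent of the \emph{original} future block $\{X_{i+1},\dots,X_k\}$ only, and you assert that mutual independence of $\nA^\star$ then ``holds by construction.'' It does not: nothing in that construction prevents $X_i^\star$ from being correlated with the previously constructed copies $X_j^\star$, which are different random variables from the originals they approximate. The paper's construction instead makes $X_i^\star$ independent of the enlarged tuple $Z_i=(X_1^\star,\dots,X_{i-1}^\star,X_{i+1},\dots,X_k)$ --- already-built copies together with not-yet-processed originals --- from which joint independence of $\nA^\star$ follows by peeling off one factor at a time. This creates the real obligation of the proof, which your proposal never identifies: one must show that enlarging the conditioning set in this way does not increase the dependence coefficient, i.e.\ $\alpha(X_i\,|\,Z_i)\leq\alpha(X_i\,|\,X_{i+1},\dots,X_k)$, for otherwise the per-step application of Bradley's theorem would not be controlled by the terms appearing in $\seqsep{\nA}$. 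The paper proves this by writing an arbitrary $B\in\nB(\nR^{k-1})$ as a countable disjoint union of rectangles and factoring out the probabilities of the star coordinates (which are independent of everything else by construction), so that the discrepancy for $Z_i$ becomes a convex combination of discrepancies involving only $(X_{i+1},\dots,X_k)$. Without this step, your argument bounds each $\abs{X_i-X_i^\star}$ but leaves property~(2), and hence the theorem, unproven.
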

The factor $k$ in Property \ref{point:approx} tells us that, for the theorem to be non-trivial, the dependence has to be smaller than $1/k$.
In most situations that we consider in the paper, the dependence coefficient is even exponentially small in $k$. 
\begin{proof}[Proof of Theorem \ref{theorem:core}]
After potentially reordering the variables inside $\nA$ we can assume that the permutation 
that achieves the minimum inside the definition of $\alpha$-separation coefficient 
in \eqref{eq:seq-alpha-sep} is just the identity mapping. 
%
We now make repeated use of Bradley's result on constructing tight copies 
of single random variables (Theorem \ref{theorem:bradley} in the appendix). 
In words, for a given target variable and a set of another variables this theorem asserts the existence of a copy that has the same marginal distribution as the target variable and is independent of the given set of variables; in addition, the probability that the copy deviates from the target variable is controlled by the $\alpha$-dependence between the target and the given set.
First, we define $X^\star_1$, as a copy of $X_1$ that is independent of $Z_1 = (X_2, \dots, X_k)$.
Second, we construct $X^\star_2$, a copy of $X_2$, which is independent of $Z_2 = (X_1^\star, X_3, \dots, X_k)$, 
then, $X_3^\star$, independent of $Z_3 = (X_1^\star, X_2^\star, X_4, \dots, X_k)$ and so on. 
After $k-1$ steps, we obtain a set $\nA^\star = \lbrace X_1^\star, \dots, X_k^\star \rbrace $ with the $X_k^\star = X_k$.

Now we verify that these variables satisfy the conditions of the lemma.
By construction, each $X_i^\star$ has the same marginal distribution as $X_i$.
It is also easy to see that all constructed variables are independent:
\begin{align}
\PP{X^\star_{1}\!\in\! B_1, \dots, X^\star_{K} \!\in\! B_k} 
& = \PP{ X^\star_{1}\!\in\! B_1, \dots, X^\star_{K-1} \!\in\! B_{k-1} }\PP{X^\star_{k} \!\in\! B_k}  \notag \\
& = \PP{ X^\star_{1}\!\in\! B_1, \dots, X^\star_{k-2} \!\in\! B_{k-2} } \PP{X^\star_{k-1} \!\in\! B_{k-1}} \PP{X^\star_{k} \!\in\! B_k}  \notag \\
&= \dots = \prod_{i=1}^k \PP{X^\star_{i} \!\in\! B_i}
\end{align}
for any rectangle $B_1 \times \dots \times B_k \in \nB(\nR^k)$, where $\nB(\nR^k)$ is the Borel sigma algebra of $\nR^k$.
In addition, at each step, the dependence of $X_i$ on $Z_i$ is bounded by the dependence of $X_i$ on $X_{i+1}, \dots, X_k$.
To see this, observe that any set $B \in \nB(\nR^{k-1})$ can be written as a countable union of disjoint rectangles: $B = \bigcup_{j=1}^\infty B_j$, where each $B_j = \bigotimes_{m=1}^{k-1}B_{j,m}$ with $B_{j,m}\in\nB(\nR)$. 
Then, for any $A \in \nB(\nR)$,
\begin{align}
& \PP{ X_i \in A, Z_i \in B } = \sum_{j=1}^{\infty}\PP{ X_i \in A, Z_i \in B_j } \\
& = \sum_{j=1}^{\infty}\PP{ X_i \in A, \ (X^\star_1, \dots, X^\star_{i-1}) \in \bigotimes_{m=1}^{i-1} B_{j,m}, \ (X_{i+1}, \dots, X_{k}) \in \bigotimes_{m=i}^{k-1} B_{j,m} } \\
& = \sum_{j=1}^{\infty} \PP{ (X^\star_1, \dots, X^\star_{i-1}) \in \bigotimes_{m=1}^{i-1} B_{j,m} } \PP{ X_i \in A, \ (X_{i+1}, \dots, X_{k}) \in \bigotimes_{m=i}^{k-1} B_{j,m} }.
\end{align}

Using the same decomposition for $\PP{Z_i \in B }$,
\begin{align}
& \PP{ X_i \in A, Z_i \in B } - \PP{ X_i \in A} \PP{ Z_i \in B } = \sum_{j=1}^{\infty} \PP{ (X^\star_1, \dots, X^\star_{i-1}) \in \bigotimes_{m=1}^{i-1} B_{j,m} } \times \\
& \big(  \PP{ X_i \in A, (X_{i+1}, \dots, X_{k}) \in \bigotimes_{m=i}^{k-1} B_{j,m} } - \PP{ X_i \in A } \PP{ (X_{i+1}, \dots, X_{k}) \in \bigotimes_{m=i}^{k-1} B_{j,m} } \big) \notag \\
& \leq \sum_{j=1}^{\infty} \PP{ (X^\star_1, \dots, X^\star_{i-1}) \in \bigotimes_{m=1}^{i-1} B_{j,m} } \alpha(X_i| X_{i+1}, \dots, X_k) \leq \alpha(X_i| X_{i+1}, \dots, X_k).
\end{align}
Hence, from Theorem \ref{theorem:bradley}, we have for each $X_i^\star$,
\begin{align}
\PP{\abs{X_i - X^\star_i} > \lambda} \leq 18 \sqrt{\frac{b_i-a_i}{\lambda}} \alpha(X_i|Z_i) \leq 18 \sqrt{\frac{b_i-a_i}{\lambda}} \alpha(X_i| X_{i+1}, \dots, X_k). \label{eq:copybound}
\end{align}
And, finally, we can bound
\begin{align}
\PP{\frac{1}{k}\sum_{i=1}^{k}\abs{X_i - X_i^\star} > \lambda} & \leq \sum_{i=1}^{k} \PP{\abs{X_i - X_i^\star} > \lambda} \\
& \leq 18 \frac{1}{\sqrt{\lambda}} \sum_{i=1}^{k-1}\alpha(X_i| X_{i+1}, \dots, X_k)\sqrt{b_i-a_i} \\
& = 18k\seqsep{\nA}\sqrt{\frac{r(\nA)}{\lambda}}.
\end{align}

\end{proof}


\section{Concentration results}\label{sec:concentration}

In this section we present our main application of the Approximation Theorem to the concentration of averages of random variables.
Our results rely on the new notion of $\gamma$-independence and soft covers 
that 
allow us to identify and manipulate subsets of random variables that are weakly dependent.
%

\subsection{Proper covers}
We start with the basic notion of covers and fractional covers for a set $\nA$ of random 
variables, which were formally defined, \eg, in~\citep{Janson01}.
\begin{definition}[Proper cover of $\nA$]
Let $\nA$ be a set $\lbrace X_1, \dots, X_k \rbrace$ of random variables.
\begin{itemize}
\item a set $\nI \subseteq [k]$ is called independent if the corresponding random variables $\lbrace X_i, i\in\nI \rbrace$ are independent,
\item a family $\lbrace \nI_j \rbrace$ of subsets of $[k]$ is a cover of $\nA$ if $\cup_{j}\nI_j = [k]$,
\item a cover is {\em proper} if each set $\nI_j$ in it is independent,
\item $\chi(\nA)$ is the size of the smallest proper cover of $\nA$, \ie the smallest $m$ such that $[k]$ is the union of $m$ independent subsets.
\end{itemize}
\end{definition}
\begin{definition}[Proper fractional cover of $\nA$]
Let $\nA$ be a set $\lbrace X_1, \dots, X_k \rbrace$ of random variables.
\begin{itemize}
\item a family $\{(\nI_j, w_j )\}$ of pairs $(\nI_j, w_j)$, where $\nI_j\subseteq [k]$ and $w_j\in[0, 1]$, is a {\em fractional cover} of $\nA$ if for all $1 \leq i \leq k$, $\sum_{j:i\in\nI_j} w_j \geq 1$,
\item a fractional cover is {\em proper} if each set $\nI_j$ in it is independent,
\item $\chi^*(\nA)$ is the minimum of $\sum_j w_j$ over all proper fractional covers $\{(\nI_j, w_j)\}_j$ of $\nA$.
\end{itemize}
\end{definition}
Note that, as observed by \citep[Lemma 3.2]{Janson01}, we can restrict ourselves to working with {\em exact} fractional covers, 
which for every $i\in[k]$ requires $\sum_{j:i\in\nI_j} w_j = 1$ instead of the weaker condition $\sum_{j:i\in\nI_j} w_j \geq 1$ 
for non-exact fractional covers. This is possible without loss of generality, as any fractional covers induces an exact fractional 
cover.

A cover of $\nA$ splits the set into subsets of independent variables so that the union of all the subsets is the original set.
The usefulness of this decomposition is that it makes it possible to have concentration results for sets of independent variables to be extended to sets of possibly dependent variables by i) using the results on the smaller independent subsets and by ii) combining the so obtained results in a global result applying for the whole family of variables.

An alternative way to get a good grasp at what a proper cover is, is to connect it with 
the idea of graph coloring.
We can define a dependency graph $G$ for a set $\nA = \lbrace X_1, \dots, X_k \rbrace$ as a graph with vertex set $[k]$ and if $i$ is not connected by an edge to any element of a set $\nI \subseteq [k]$, then $X_i$ is independent of the variables $X_j$ for $j \in \nI$.
The coloring number of this dependency graph is an upper bound on $\chi(\nA)$. 
%
%
%
%
Similarly, there exist a notion of fractional coloring of a graph and the corresponding fractional coloring number that provides an upper bound on $\chi^\star(\nA)$.
%
%

\subsection{Soft covers: beyond 0-1 dependencies}\label{sec:softcovers}
In this paper we look beyond the proper covers and allow for sets that are interdependent, 
but with a controlled amount of dependence, as measured by $\alpha$-separation.
By taking this route, on the contrary to what was studied in~\citep{Janson01}, we will be able to 
establish results that take into account the magnitude of the dependencies between random 
variables. 

%

%
%
%
%
\begin{definition}[Soft cover of $\nA$]
Let $\nA$ be a set of random variables and $\gamma \geq 0$ be a threshold.
\begin{itemize}
\item a set $\nI \subseteq [k]$ is called \emph{$\gamma$-independent} if $\seqsep{\nI} \leq \gamma$,
\item a cover is called \emph{soft} if each set $\nI_j$ in it is $\gamma$-independent,
\item $\chi_\gamma(\nA)$ is the size of the smallest soft cover of $\nA$ for a given $\gamma$.
\end{itemize}
\end{definition}
Analogously, we extend the definition of fractional covers.
\begin{definition}
Let $\nA$ be a set of random variables and $\gamma \geq 0$ a threshold.
\begin{itemize}
\item a fractional cover is called \emph{soft} if each set $\nI_j$ in it is $\gamma$-independent,
\item $\chi^\star_\gamma(\nA)$ is the minimum of $\sum_{j} w_j$ over all soft fractional covers $\{(\nI_j, w_j)\}_j$ of $\nA$ for a given $\gamma$.
\end{itemize}
\end{definition}
\usetikzlibrary{patterns}
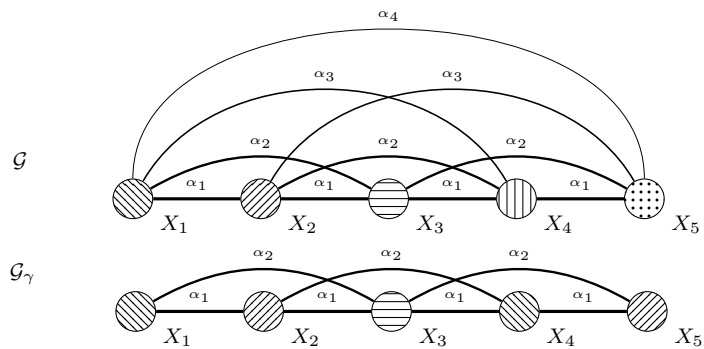
\begin{figure}
\begin{center}
	\begin{tikzpicture}[style1/.style={circle,draw,minimum size=15,pattern=north west lines},style2/.style={circle,draw,minimum size=15,pattern=north east lines},style3/.style={circle,draw,minimum size=15,pattern=horizontal lines},style4/.style={circle,draw,minimum size=15,pattern=vertical lines},style5/.style={circle,draw,minimum size=15,pattern=dots
}]
	\pgfmathsetmacro{\nn}{4}
	
	\node at (-1.5,0.5) {$\graph$};
	
	\foreach \i in {0,1,...,\nn}{
		\pgfmathtruncatemacro{\ii}{\i+1}
		\pgfmathtruncatemacro{\st}{Mod(\i, 5)+1}
		\node[label={[label distance=0.01cm]-25:${X}_{\ii}$},style\st] (\i) at (\i*1.7,0) {};
	}
	
	\pgfmathsetmacro{\step}{30}
	\pgfmathsetmacro{\maxI}{\nn-1}
	\foreach \i in {0,1,...,\maxI}{
		\pgfmathtruncatemacro{\z}{\i+1}
	    \foreach \j in {\z,...,\nn}{
					\pgfmathtruncatemacro{\delta}{\j-\i}
	    			\pgfmathsetmacro{\angle}{\step*(\delta-1)}
	    			\pgfmathsetmacro{\thickness}{0.3*(\nn-\delta)+0.3}
					\draw[line width=\thickness pt,out=\angle,in=180-\angle] (\i) to node[midway,above] {\tiny $\alpha_\delta$} (\j);
		}
	}
	\end{tikzpicture}
	
	\begin{tikzpicture}[style1/.style={circle,draw,minimum size=15,pattern=north west lines},style2/.style={circle,draw,minimum size=15,pattern=north east lines},style3/.style={circle,draw,minimum size=15,pattern=horizontal lines},style4/.style={circle,draw,minimum size=15,pattern=north west lines},style5/.style={circle,draw,minimum size=15,pattern=north east lines}]
	\pgfmathsetmacro{\nn}{4}
	\pgfmathsetmacro{\nb}{1}

	\node at (-1.5,0.5) {$\graph_{\gamma}$};
		
	\foreach \i in {0,1,...,\nn}{
		\pgfmathtruncatemacro{\ii}{\i+1}
		\pgfmathtruncatemacro{\st}{Mod(\i, 5)+1}
		\node[label={[label distance=0.01cm]-25:${X}_{\ii}$},style\st] (\i) at (\i*1.7,0) {};
	}
	
	\pgfmathsetmacro{\step}{30}
	\pgfmathsetmacro{\maxI}{\nn-1}
	\foreach \i in {0,1,...,\maxI}{
		\pgfmathtruncatemacro{\z}{\i+1}
	    \foreach \j in {\z,...,\nn}{
					\pgfmathtruncatemacro{\delta}{\j-\i}
	    			\pgfmathsetmacro{\angle}{\step*(\delta-1)}
	    			\pgfmathsetmacro{\thickness}{0.3*(\nn-\delta)+0.3}
					\draw[line width=\thickness pt,out=\angle,in=180-\angle] (\i) to node[midway,above] {\tiny $\alpha_\delta$} (\j);
					\ifnum \delta > \nb
						\breakforeach
					\fi 
		}
	}
	\end{tikzpicture}
\end{center}
\caption{Typical graph associated with a mixing process. The nodes correspond to random variables and the edges to dependences.
The strength $\alpha_i$ of dependence depends upon the 'distance' $i$ between the nodes: here, the weaker the dependence, the
thinner the edge. The graph $G_\gamma$ is shown for $\alpha_3 \leq \gamma \leq \alpha_2$. The nodes are patterned according to the coloring.}
\label{figure:graphexample}
\end{figure}


Similarly to the usual covers we can connect these definition to the graph colorings of a carefully defined dependency graph.
We define a thresholded dependency graph $G_\gamma$ for a set $\nA = \lbrace X_1\, \dots, X_k \rbrace$ 
as a graph with vertex set $[k]$ and the edge set defined by the rule: if a node $i$ is not connected 
by an edge to any element of a set $\nI \subseteq [k]$, then $\alpha(X_i| X_j, j\in\nI) \leq \gamma$.
Then any set that is independent in the graph theoretical sense corresponds to a $\gamma$-independent set of random variables.
Consequently, the $\chi_\gamma$ for such a set can be bounded by the coloring number of $G_\gamma$ and a similar observation holds for fractional covers.
As an example of an advantage that this relaxation brings, let us consider a mixing process of length $5$ as depicted in the Figure \ref{figure:graphexample}.
The numbers $\alpha_i$, which represent the strength of dependencies, are formally defined in the Section \ref{sec:mixing}.
As the dependency graph for this process is a complete graph, its coloring number is the maximum possible, that is $5$.
However, we would like it to be as small as possible as the coloring number $1$ corresponds to an independent set.
Any $\gamma$ between $\alpha_2$ and $\alpha_3$ gives us the thresholded graph $G_\gamma$, which now has a chromatic number of $3$, as there are three independent sets: $\lbrace 1, 4\rbrace$, $\lbrace 2, 5\rbrace$ and $\lbrace 3 \rbrace $.
%

\subsection{The concentration}

Having established the dependence measure and the notion of coverings, 
we present a number of concentration bounds that take the strength of 
the dependence into account.
We state the results for covers and fractional covers separately, 
since the proof for the latter uses an additional approximation 
step that leads to an additional term in the bound
%

\begin{theorem}\label{theorem:maincovering}
Let $\nA = \lbrace X_1, \dots, X_n\rbrace$ be a set of random variable with $a_i \leq X_i \leq b_i$ for some $a_i, b_i \in \nR$.
Then, for any threshold $\gamma > 0$, $t > 0$ and $\lambda < t$
\begin{equation}
\PP{\frac{1}{n}\sum_{i=1}^n X_i - \frac{1}{n} \sum_{i=1}^n\EE{X_i} > t } \leq \e{-\frac{2n^2(t-\lambda)^2}{\chi_\gamma(\nA) \sum_{i=1}^n(b_i-a_i)^2}} + 18n\gamma\sqrt{\frac{r(\nA)}{\lambda}}.
\end{equation}
\end{theorem}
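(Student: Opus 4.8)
The plan is to combine the Approximation Theorem with the classical exponential-moment argument for proper covers (the Janson-type bound sketched in the introduction). The free parameter $\lambda$ will mark the boundary between two regimes: the error incurred by replacing the dependent variables with independent surrogates, and the concentration of those surrogates.

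First I would fix a smallest soft cover $\{\nI_j\}_{j=1}^{m}$ of $\nA$ with $m=\chi_\gamma(\nA)$ and reduce it to a disjoint partition $[n]=\nJ_1\sqcup\dots\sqcup\nJ_{m'}$, $m'\le m$, by setting $\nJ_j=\nI_j\setminus\bigcup_{l<j}\nI_l$. To each block I would apply Theorem~\ref{theorem:core}, producing for every $i$ a surrogate $X_i^\star$ with the same marginal as $X_i$ (so that $\EE{X_i^\star}=\EE{X_i}$ and $a_i\le X_i^\star\le b_i$) and such that the surrogates inside each block are mutually independent. I would then decompose $\frac1n\sum_i(X_i-\EE{X_i})=\frac1n\sum_i(X_i-X_i^\star)+\frac1n\sum_i(X_i^\star-\EE{X_i^\star})$ and observe that the event $\{\frac1n\sum_i(X_i-\EE{X_i})>t\}$ is contained in $\{\frac1n\sum_i|X_i-X_i^\star|>\lambda\}\cup\{\frac1n\sum_i(X_i^\star-\EE{X_i^\star})>t-\lambda\}$, so a union bound splits the target probability into an approximation term and a concentration term.

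For the approximation term I would use that $\frac1n\sum_i|X_i-X_i^\star|>\lambda$ forces $|X_i-X_i^\star|>\lambda$ for at least one $i$; a union bound over the $n$ indices together with the single-variable estimate \eqref{eq:copybound} from the proof of Theorem~\ref{theorem:core} bounds it by $\frac{18}{\sqrt\lambda}\sum_{j}\sum_{i\in\nJ_j}\sqrt{b_i-a_i}\,\alpha(X_i\mid X_{i'},\,i'\text{ later in }\nJ_j)$. Bounding each $\sqrt{b_i-a_i}\le\sqrt{r(\nA)}$ and each conditional $\alpha$-coefficient by $\gamma$ (this is where the $\gamma$-independence of the blocks enters) collapses the double sum to at most $n\gamma$, giving the additive term $18n\gamma\sqrt{r(\nA)/\lambda}$. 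For the concentration term I would run the standard argument: apply Markov to the exponential moment, use Hölder's inequality across the $m'$ blocks with exponent $m'$, exploit within-block independence to factorize, apply Hoeffding's lemma on $[a_i,b_i]$ to each factor, and optimize over the free parameter $s$. This yields $\exp\!\big(-2n^2(t-\lambda)^2/(m'\sum_i(b_i-a_i)^2)\big)$, which is at most the stated exponential since $m'\le\chi_\gamma(\nA)$ and the bound is monotone in the denominator.

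The step I expect to be most delicate is the reduction to a disjoint partition while keeping the approximation budget at $n\gamma$: the $\alpha$-separation coefficient is an \emph{average} over an ordering, so passing to a subset need not preserve $\gamma$-independence verbatim, and what I really need is the sharper per-element statement that inside each block every variable has conditional $\alpha$-coefficient at most $\gamma$ against the remaining variables of its block, which is exactly what the coloring of the thresholded graph $G_\gamma$ supplies. A secondary technical point is to realize all the surrogates on one common probability space so that the cross-block Hölder step is legitimate, even though only within-block independence is actually used in the factorization.
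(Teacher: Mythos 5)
Your proposal is correct and follows essentially the same route as the paper's proof: apply the Approximation Theorem block-by-block to a soft cover, split the deviation at $\lambda$ into an approximation event bounded by $18n\gamma\sqrt{r(\nA)/\lambda}$ and a concentration event for the surrogates, then conclude with the Janson-type Hoeffding bound for proper covers (which the paper simply cites as Theorem~2.1 of \citep{Janson01}, where you re-derive it via H\"older across blocks). The disjointification subtlety you flag is genuine under the averaged definition of $\seqsep{\cdot}$, but the paper silently sidesteps it by treating the chosen cover as a partition (its estimate $\sum_j \PP{E_j^c} \leq 18n\gamma\sqrt{r(\nA)/\lambda}$ implicitly uses $\sum_j \abs{\nI_j} = n$), so your per-element fix via the coloring of $G_\gamma$ is, if anything, more careful than the original argument.
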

The parameter $\lambda$ in Theorem~\ref{theorem:maincovering} provides a trade-off between two terms of the upper bound.
In applications we always use $\lambda = t/2$.
\begin{theorem}\label{theorem:mainfractional}
Fix a set of random variables $\nA = \lbrace X_1, \dots, X_n\rbrace$, with $a_i \leq X_i \leq b_i$ for some $a_i, b_i \in \nR$.
Then for any threshold $\gamma > 0$, $t > 0$ and $\lambda < t$
\begin{align}
\PP{\frac{1}{n}\sum_{i=1}^n X_i - \frac{1}{n} \sum_{i=1}^n\EE{X_i} > t } & \leq \e{-\frac{n^2(t-\lambda)^2}{2\chi_\gamma^\star(\nA) \sum_{i=1}^n(b_i-a_i)^2}} \\
& + \e{-\frac{n^2(t-\lambda)^2}{ 2\sum_{i=1}^n(b_i-a_i)^2}} + 18n\gamma\sqrt{\frac{r(\nA)}{\lambda}}. \notag
\label{eq:mainfractional}
\end{align}
This can be further upper bounded by a less tighter expression
\begin{equation}
\PP{\frac{1}{n}\sum_{i=1}^n X_i - \frac{1}{n} \sum_{i=1}^n\EE{X_i} > t } \leq 2\e{-\frac{n^2(t-\lambda)^2}{2\chi_\gamma^\star(\nA) \sum_{i=1}^n(b_i-a_i)^2}} + 18n\gamma\sqrt{\frac{r(\nA)}{\lambda}}.
\end{equation} 
\end{theorem}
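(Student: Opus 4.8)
The plan is to combine the Approximation Theorem (Theorem~\ref{theorem:core}) with a union bound over a soft fractional cover, mirroring the structure of the proof of Theorem~\ref{theorem:maincovering} but handling the fractional weights carefully. First I would fix an optimal proper soft fractional cover $\{(\nI_j, w_j)\}_j$ achieving $\sum_j w_j = \chi^\star_\gamma(\nA)$, which by the remark after the definition of proper fractional covers we may take to be \emph{exact}, so that $\sum_{j : i \in \nI_j} w_j = 1$ for every $i \in [n]$. The key idea is to decompose the sample mean along the cover: writing $S = \sum_{i=1}^n (X_i - \EE{X_i})$, exactness gives $S = \sum_j w_j \sum_{i \in \nI_j}(X_i - \EE{X_i})$, so that $\{S > n t\}$ implies that at least one block overshoots its share, i.e. $\sum_{i \in \nI_j}(X_i - \EE{X_i}) > n t$ for some $j$ (the standard averaging argument, since the $w_j$ sum to at least one over each index and we normalize appropriately).

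Next, for each block $\nI_j$ I would invoke the Approximation Theorem on the weakly-dependent subset $\{X_i : i \in \nI_j\}$, which by $\gamma$-independence satisfies $\seqsep{\nI_j} \leq \gamma$. This produces an independent surrogate set $\nA^\star_j$ with matching marginals such that $\frac{1}{|\nI_j|}\sum_{i \in \nI_j}\abs{X_i - X_i^\star}$ exceeds $\lambda$ only with probability at most $18\,|\nI_j|\,\gamma\sqrt{r(\nA)/\lambda}$. Splitting the event $\{\sum_{i\in\nI_j}(X_i - \EE{X_i}) > \dots\}$ into the contribution of the independent copies (to which Hoeffding applies, yielding the first exponential term with $\chi^\star_\gamma$ in the denominator) and the approximation error (governed by Property~\ref{point:approx}) is the heart of the argument. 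The second exponential term $\e{-n^2(t-\lambda)^2/(2\sum_i(b_i-a_i)^2)}$ arises, as the theorem statement's phrasing ``an additional approximation step that leads to an additional term'' signals, from controlling the global fluctuation introduced when passing from the fractional combination of per-block Hoeffding bounds back to a single deviation statement---concretely, from a separate Hoeffding-type bound on a reweighted sum that the fractional decomposition forces us to introduce, which is absent in the integer-cover case.

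The main obstacle I anticipate is precisely this fractional bookkeeping: unlike the integer cover of Theorem~\ref{theorem:maincovering}, where each index belongs to exactly one block and the union bound is immediate, here the weights $w_j$ distribute each variable across several blocks, so the naive union bound does not directly give the clean $\chi^\star_\gamma$ factor. The trick (following Janson's use of fractional covers) is to apply Hoeffding to the \emph{weighted} block sums and use the convexity/exact-cover identity $\sum_j w_j = \chi^\star_\gamma(\nA)$ together with $\sum_j w_j \one[i \in \nI_j] = 1$ to reassemble the variance proxy $\sum_i (b_i-a_i)^2$ with the correct $\chi^\star_\gamma$ scaling. I would set this up via the moment-generating-function route: bound $\EE{\exp(s \sum_j w_j T_j)}$ where $T_j$ is the centered block sum of the independent copies, distribute the exponential across blocks using the weights, and optimize in $s$.

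Finally, I would collect the three contributions---the fractional Hoeffding term with denominator $2\chi^\star_\gamma(\nA)\sum_i(b_i-a_i)^2$, the auxiliary global term, and the approximation term $18n\gamma\sqrt{r(\nA)/\lambda}$ summed over blocks (using $\sum_j |\nI_j| w_j$-type counting to recover the factor $n$)---into the stated bound, and then derive the looser final inequality simply by bounding the second exponential term by the first (valid since $\chi^\star_\gamma(\nA) \geq 1$ makes its exponent more negative) to merge them into the factor $2$.
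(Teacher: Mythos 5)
Your skeleton matches the paper's proof in outline --- exact fractional cover, per-block application of Theorem~\ref{theorem:core}, Janson's Theorem~2.1 for the weighted sum, a split of the threshold, and the final doubling via $\chi^\star_\gamma(\nA)\geq 1$ --- but you are missing the one genuinely new device on which the argument turns. The paper introduces auxiliary \emph{independent random indices} $J_i$ with $\PP{J_i = j} = w_j$ (well defined by exactness, $\sum_{j:i\in\nI_j}w_j=1$), independent of everything else, and sets $\tilde{X}_i = X^\star_{i,J_i}$: each variable is approximated by a randomly chosen one of its per-block copies. This randomization does two jobs at once. First, it makes the approximation-error probabilities enter \emph{with the weights}: $\PP{\abs{X_i-\tilde{X}_i}>\lambda} = \sum_{j:i\in\nI_j} w_j\PP{\abs{X_i - X^\star_{i,j}}>\lambda}$, so summing over $i$ gives $\frac{18\sqrt{r(\nA)}}{\sqrt{\lambda}}\sum_j w_j\abs{\nI_j}\seqsep{\nI_j}\leq \frac{18 n\gamma\sqrt{r(\nA)}}{\sqrt{\lambda}}$, using $\sum_j w_j\abs{\nI_j}=n$. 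Your plan never specifies this mechanism; if you instead compare $X_i$ to the deterministic convex combination $\sum_j w_j X^\star_{i,j}$, the event $\lbrace\abs{X_i-\sum_j w_j X^\star_{i,j}}>\lambda\rbrace$ forces an \emph{unweighted} union bound over all copies of $X_i$, producing $18\gamma\sqrt{r(\nA)/\lambda}\,\sum_j\abs{\nI_j}$, and $\sum_j\abs{\nI_j}$ can badly exceed $n$ because blocks overlap --- so the stated constant is not recovered. Your ``$\sum_j \abs{\nI_j} w_j$-type counting'' silently assumes that the weights multiply the per-copy failure probabilities, which only the randomization provides. Second, the $J_i$ are exactly the independent randomness behind the second exponential term: conditionally on all the copies, the $\tilde{X}_i$ are independent with conditional means $\sum_{j:i\in\nI_j}w_j X^\star_{i,j}$ and ranges $b_i-a_i$, and Hoeffding at threshold $\frac{t-\lambda}{2}$ yields $\e{-n^2(t-\lambda)^2/(2\sum_i(b_i-a_i)^2)}$, while Janson's theorem applied to $\frac{1}{n}\sum_i\sum_{j:i\in\nI_j} w_j X^\star_{i,j}$ at threshold $\frac{t-\lambda}{2}$ yields the $\chi^\star_\gamma$ term. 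You gesture at ``a separate Hoeffding-type bound on a reweighted sum'' but never say over what randomness that bound is taken; without the $J_i$ there is no independent randomness in your construction to which it could apply.

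Two smaller points. Your opening averaging claim is false as stated: $\sum_j w_j T_j > nt$ with $\sum_j w_j = \chi^\star_\gamma(\nA)$ only forces some block sum to exceed $nt/\chi^\star_\gamma(\nA)$, not $nt$, and the resulting union bound would in any case give exponents with $(\chi^\star_\gamma(\nA))^2$ in the denominator; you correctly abandon this for the MGF/H\"older route, which is precisely what Janson's Theorem~2.1 encapsulates (the paper simply cites it rather than re-deriving it). Also, the split of $t-\lambda$ into two halves for the two exponential terms should be made explicit. Your closing step --- merging the two exponentials into $2\e{-n^2(t-\lambda)^2/(2\chi^\star_\gamma(\nA)\sum_i(b_i-a_i)^2)}$ because $\chi^\star_\gamma(\nA)\geq 1$ --- is correct. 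With the $J_i$ device added, your outline becomes the paper's proof.
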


Note that the same results hold for the $\PP{\sum_{i\in\nA}X_i - \sum_{i\in\nA}\EE{X_i} < - t }$.
Obviously, the case $\gamma = 0$ brings us back to the setting of \citep{Janson01}, recovering its bounds
up to constants. 

%
%


\begin{proof}[Proof of Theorem \ref{theorem:maincovering}]
Without loss of generality, we assume that $\EE{X_i} = 0$ for all $i$.
We start by fixing a covering $\left\lbrace \nI_j \right\rbrace$ of $\Gamma(\gamma) $ and constructing another set of random variables, $\lbrace X_1^\star, \dots, X_n^\star \rbrace$,
by applying Theorem \ref{theorem:core} to each set $\nI_j$ in the covering. 
By construction, the variables corresponding to each set $\nI_j$ are independent.

%
We introduce events $E_j = \big\lbrace \frac{1}{|\nI_j|} \sum_{i\in\nI_j} \abs{ X_i - X^\star_i } < \lambda \big\rbrace $ and note that $\PP{E^c_j}$ is bounded by $\frac{18|\nI_j|\seqsep{\nI_j}\sqrt{r(\nA)}}{\sqrt{\lambda}}$ and therefore by $\frac{18|\nI_j|\gamma\sqrt{r(\nA)}}{\sqrt{\lambda}}$.
For $E = \bigcap_{j}E_j$ we get
\begin{align}
\PP{\frac{1}{n}\sum_{i=1}^n X_i > t} & = \PP{\frac{1}{n}\sum_{i=1}^nX_i > t \wedge E} + \PP{\frac{1}{n}\sum_{i=1}^n X_i > t \wedge E^c} \\
& \leq \PP{\frac{1}{n}\sum_{i=1}^nX_i - \frac{1}{n}\sum_{i=1}^nX^\star_i > t - \frac{1}{n}\sum_{i=1}^nX^\star_i \wedge E} + \PP{E^c} \\
& \leq \PP{\lambda > t - \frac{1}{n}\sum_{i=1}^nX^\star_i \wedge E} + \sum_{j}\PP{E^c_j} \\
& \leq \PP{\frac{1}{n}\sum_{i=1}^nX^\star_i > t - \lambda} + \frac{18n\gamma\sqrt{r(\nA)}}{\sqrt{\lambda}}.
\end{align}
We get the statement of the theorem by applying Theorem 2.1 of \citep{Janson01} to the first summand.
\end{proof}
\begin{proof}[Proof of Theorem \ref{theorem:mainfractional}]
The proof of theorem relies on the same construction as in Theorem \ref{theorem:maincovering}, but with an additional approximation step to account for the fact that we will construct several copies of the same variable.
As before, we assume that $\EE{X_i} = 0$ for all $i$.
We start by fixing the fractional cover of $\nA$, $\lbrace \nI_j, w_j \rbrace$.
For each variable $X_i$ we construct a number of different copies, $X_{i,j}^\star$, using Theorem \ref{theorem:core} in a such way that for a fixed $j$, $\lbrace X_{i,j}^\star, i \in \nI_j \rbrace$ are independent.
Moreover, from the proof of Theorem \ref{theorem:core} from \eqref{eq:copybound}, for each $i,j$ we know that $\PP{\abs{X_i - X_{i,j}^\star} > \lambda} \leq 18\sqrt{\frac{b_i-a_i}{\lambda}} \alpha_{i,j}$, where $\alpha_{i,j}$ is the corresponding dependence coefficient for the optimal ordering inside $\nI_j$.
Since we consider exact fractional covers, we know that for any $i$, $\sum_{j:i\in\nI_j}w_j = 1$ and we can use this to define random variables $J_i$ that take the index of the sets from the cover that $X_i$ belongs to with probability $w_j$.
Note that we can require those variables to be independent from each other and from all other random variables we consider.
Finally, we define our final approximations $\tilde{X}_i = X^\star_{i,J_i}$.
The first thing to observe is that
\begin{equation}
\PP{\big| X_i - \tilde{X_i} \big| > \lambda} = \sum_{j:i\in\nI_j} w_j \PP{ \abs{X_i - X_{i,j}^\star} > \lambda}\leq \frac{18}{\sqrt{\lambda}} \sum_{j:i\in\nI_j} w_j  \alpha_{i,j}\sqrt{b_i-a_i}.
\end{equation}
As before, we define events $E_i = \lbrace | X_i - \tilde{X_i} | \leq \lambda \rbrace$ (but now individually for each variable) and $E = \bigcap_{i=1}^n E_i$.
Now we have,
\begin{equation}\label{eq:fract-proof-after-approx}
\PP{\frac{1}{n}\sum_{i=1}^{n} X_i > t} \leq \PP{\frac{1}{n}\sum_{i=1}^{n} \tilde{X_i} > t-\lambda} + \sum_{i=1}^{n}\PP{E_i^c}.
\end{equation}
For the first summand we have the following inequality 
%

\begin{align}
& \PP{\frac{1}{n}\sum_{i=1}^{n} \tilde{X_i} > t-\lambda} \\
& \leq \PP{\frac{1}{n}\sum_{i=1}^{n} \tilde{X_i} - \frac{1}{n}\sum_{i=1}^{n}\sum_{j:i\in\nI_j} w_j X^\star_{i,j} > \frac{t-\lambda}{2}} + \PP{\frac{1}{n}\sum_{i=1}^{n}\sum_{j:i\in\nI_j} w_j X^\star_{i,j} > \frac{t-\lambda}{2}}. \label{eq:fract-proof-endline}
\end{align}
For fixed $X^\star_{i,j}$'s the first term in \eqref{eq:fract-proof-endline} can be bounded using the Hoeffding's inequality for $J_i$ as they are independent.
The second term is bounded by Theorem~2.1 of \citep{Janson01} (skipping the first step of the proof of the theorem).
The last thing that is left to do is to bound the second summand in \eqref{eq:fract-proof-after-approx}.
For it we have
\begin{align}
\sum_{i=1}^{n}\PP{E_i^c} & \leq \frac{18}{\sqrt{\lambda}} \sum_{i=1}^{n} \sum_{j:i\in\nI_j} w_j \alpha_{i,j}\sqrt{b_i-a_i} \\
& \leq \frac{18\sqrt{r(\nA)}}{\sqrt{\lambda}} \sum_j w_j |\nI_j| \seqsep{\nI_j} \\
& \leq \frac{18\sqrt{r(\nA)}\gamma}{\sqrt{\lambda}} \sum_j w_j |\nI_j| = \frac{18\sqrt{r(\nA)}n\gamma}{\sqrt{\lambda}}.
\end{align}

\end{proof}

\subsection{Lower bound}
The goal of this section is to show that the additive linear dependence in Theorems~\ref{theorem:maincovering} and~\ref{theorem:mainfractional}
is unavoidable. 
For this, we demonstrate a lower bound for the concentration in terms of the $\alpha$-separation coefficient. 
%
%
%
%
\begin{theorem}\label{theorem:lowerbound}
For even $n$ and for any integer $t \in [0, \frac{n}{8}]$ and $\gamma \in [0, \frac{1}{4n}]$, there exists a distribution over a set $\nA = \lbrace X_1, \dots, X_n \rbrace$ with each $X_i$ being a Bernoulli random variable with parameter $\frac{1}{2}$ with $\seqsep{\nA} = \gamma$, such that 
\begin{equation}
\PP{\sum_{i=1}^{n}X_i - \frac{n}{2} \geq t} \geq \frac{1}{15}e^{-16t^2/n} + 4n\seqsep{\nA}\frac{1}{2^n}\binom{n-1}{\frac{n}{2}+t-1}.
\end{equation}
\end{theorem}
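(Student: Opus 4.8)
The plan is to exhibit an explicit coupled family and read off the two terms from an independent baseline and a single dependence-induced correction. I would take $X_1,\dots,X_{n-1}$ to be genuinely i.i.d.\ Bernoulli$(\tfrac12)$ and couple $X_n$ to their partial sum $S_{n-1}=\sum_{i=1}^{n-1}X_i$ so that $X_n$ is biased towards $1$ exactly when $S_{n-1}$ is large. Concretely, set $\PP{X_n=1\mid X_1,\dots,X_{n-1}}=\tfrac12+\delta\,g(S_{n-1})$, where $g(s)=+1$ for $s\ge n/2$ and $g(s)=-1$ otherwise, and $\delta=2n\gamma$. Since $n$ is even and $S_{n-1}$ is symmetric about $(n-1)/2$, the two half-lines carry equal mass, so $\EE{g(S_{n-1})}=0$ and $X_n$ is again Bernoulli$(\tfrac12)$; the hypothesis $\gamma\le 1/(4n)$ is exactly what guarantees $\delta\le\tfrac12$, so the conditional probability is valid. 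By construction $X_1,\dots,X_{n-1}$ are mutually independent and all dependence of the family is carried by $X_n$.

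Next I would compute the separation coefficient. For any $C\in\sigma(X_1,\dots,X_{n-1})$ one has $\PP{\{X_n=1\}\cap C}-\PP{X_n=1}\PP{C}=\delta\,\EE{\Ind{C}\,g(S_{n-1})}$, which is maximised in absolute value by $C=\{g=+1\}$, giving $\alpha(X_n\mid X_1,\dots,X_{n-1})=\delta\,\EE{g(S_{n-1})^+}=\delta/2=n\gamma$. Placing $X_n$ first in the permutation and the remaining (independent) variables in any order makes every term but the first vanish, so this ordering evaluates the separation sum to $\tfrac1n(n\gamma)=\gamma$. To conclude that $\seqsep{\nA}=\gamma$ I must still show that no permutation does strictly better, i.e.\ that isolating $X_n$ is optimal. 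This is the step I expect to be the main obstacle: an ordering that places $X_n$ late replaces its single large term $\alpha(X_n\mid\text{all others})$ by a collection of pairwise terms $\alpha(X_m\mid\{X_n,\dots\})$, and one has to argue that, by the monotonicity of $\alpha$ under enlarging the conditioning $\sigma$-algebra together with the fact that all terms are nonnegative, their total can never fall below $n\gamma$.

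For the tail I would write $\{S_n\ge \tfrac n2+t\}$ as the disjoint union of $\{S_{n-1}\ge\tfrac n2+t\}$ and $\{S_{n-1}=\tfrac n2+t-1,\ X_n=1\}$. For $t\ge 1$ the boundary level $\tfrac n2+t-1\ge\tfrac n2$ lies in the half-line where $g=+1$, so $\PP{X_n=1\mid S_{n-1}=\tfrac n2+t-1}=\tfrac12+\delta$. Since the coupled law and the fully independent law $\PPd{\cdot}{\perp}$ (in which $X_n$ is an independent fair coin) share the same $X_1,\dots,X_{n-1}$ and differ only in the conditional bias of $X_n$, one gets $\PP{S_n\ge\tfrac n2+t}-\PPd{S_n\ge\tfrac n2+t}{\perp}=\delta\,q$ with $q=\PP{S_{n-1}=\tfrac n2+t-1}=2^{-(n-1)}\binom{n-1}{n/2+t-1}$. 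As $\delta q=2n\gamma\,q=4n\,\seqsep{\nA}\,2^{-n}\binom{n-1}{n/2+t-1}$, this reproduces the additive term exactly. It then remains to lower bound the independent part, for which I would invoke the standard reverse-Chernoff (anti-concentration) estimate for the symmetric binomial, $\PPd{S_n\ge\tfrac n2+t}{\perp}\ge\tfrac1{15}\e{-16t^2/n}$, valid on $t\le n/8$; adding the two bounds yields the claim, and the degenerate case $t=0$ (where the left-hand side is at least $\tfrac12$) is checked directly.
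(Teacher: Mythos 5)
Your construction is sound as far as it goes, and your tail computation is correct: the coupled law and the independent law agree on $\sigma(X_1,\dots,X_{n-1})$ and differ only on the boundary slice $\{S_{n-1}=\tfrac n2+t-1\}$, where for $t\geq 1$ the bias has the right sign, so the correction $\delta\,2^{-(n-1)}\binom{n-1}{n/2+t-1}=4n\gamma\,2^{-n}\binom{n-1}{n/2+t-1}$ matches the claimed additive term exactly, and the reverse-Chernoff estimate handles the independent part (this is the same anti-concentration bound, Proposition 7.3.2 of Matou\v{s}ek--Vondr\'ak, that the paper invokes). But the step you yourself flag as the main obstacle is a genuine gap, and your sketched fix does not close it. The theorem requires $\seqsep{\nA}=\gamma$, i.e.\ a \emph{minimum} over all permutations, and monotonicity of $\alpha$ under enlarging the conditioning $\sigma$-algebra only lower-bounds the terms preceding $X_n$ by pairwise coefficients $\alpha(X_m|X_n)=\tfrac{\delta}{2}c_n$ with $c_n=2^{-(n-2)}\binom{n-2}{n/2-1}=\Theta(n^{-1/2})$, while simultaneously the term for $X_n$ itself \emph{shrinks} when $X_n$ is conditioned on fewer variables: placing $X_n$ in position $j$ gives $\alpha(X_n|T)=\tfrac{\delta}{2}\,\EE{\abs{\EE{g(S_{n-1})\mid T}}}<\tfrac{\delta}{2}$ for $\abs{T}=n-j<n-1$. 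One must show the loss is always compensated, and the bookkeeping is delicate: already for $j=2$ one computes $\alpha(X_n|T)=\tfrac{\delta}{2}(1-c_n)$, so the pairwise bound on the single preceding term yields a total of exactly $\tfrac{\delta}{2}$ \emph{with no slack}; for general $j$ the naive pairwise bound gives $(j-1)\tfrac{\delta}{2}c_n$ against a deficit governed by the central probabilities of a Binomial$(n-j,\tfrac12)$, which exceed $c_n$, so a finer window/averaging estimate is needed rather than monotonicity alone. Since all $\alpha$-coefficients here are linear in $\delta$, you could rescale $\delta$ to force $\seqsep{\nA}=\gamma$ even without optimality of your ordering, but then the admissible range $\gamma\in[0,\tfrac{1}{4n}]$ is no longer guaranteed, so the theorem as stated is not recovered.

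The paper avoids this difficulty entirely by a different construction: it tilts the uniform measure by parity, $\PP{X=\omega}=2^{-n}(1+s(\omega)\varepsilon)$ with $s(\omega)$ determined by the parity of $\abs{\tfrac n2+t-\sum_i\omega_i}$. This law is exchangeable (it depends on $\omega$ only through $\sum_i\omega_i$), so the minimum over permutations is trivially attained at any ordering; moreover every $(n-1)$-subset of the variables is exactly i.i.d.\ uniform, so \emph{all} separation terms vanish except the first, which is computed in closed form as $\varepsilon/4$ via the alternating binomial identity, giving $\seqsep{\nA}=\varepsilon/(4n)$ exactly. The tail is then evaluated exactly (not just bounded) using $\sum_{k=0}^{m}\binom nk(-1)^k=(-1)^m\binom{n-1}m$, producing the same additive term you found. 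If you want to salvage your route, the cleanest repair is to replace your asymmetric coupling by a permutation-symmetric one, or else to prove the quantitative optimality-of-ordering lemma sketched above.
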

The proof can be found in the appendix.

For comparison, we apply Theorem \ref{theorem:maincovering} to the variables in the Theorem \ref{theorem:lowerbound} with $\lambda = \frac{t}{2}$ 
and for the case, when we drop all the dependencies at the same time:
\begin{equation}
\PP{\sum_{i=1}^n X_i - \frac{n}{2} > t } \leq e^{-\frac{t^2}{2n}} + 18\sqrt{2}n\seqsep{\nA}\frac{1}{\sqrt{t}}.
\end{equation}
We observe that both right hand sides have the same structure, thereby confirming that the dependence on the coefficient in Theorem \ref{theorem:maincovering} is of the right order.
Moreover, even the seemingly complicated term $g_n(t) = \frac{1}{2^n}\binom{n-1}{\frac{n}{2}+t-1}$ is a lower bound on the $\frac{1}{\sqrt{t}}$, since $\binom{n-1}{\frac{n}{2}+t-1} \sim \frac{2 ^n e^{-2t^2/n}}{\sqrt{\pi n / 2}}$ and, hence, $g_n(t) \sim \frac{e^{-2t^2/n}}{\sqrt{\pi n / 2}} \leq \frac{1}{\sqrt{n}} < \frac{1}{\sqrt{t}}$.

\section{Other consequences of the Approximation Theorem}\label{sec:consequences}
In this section we present two other consequences of the Approximation Theorem that can be of independent interest.
\subsection{$L_p$-Distance for approximations}
The goal of this section is to provide bounds on the $L_p$-distance between the original set of variables and the approximating set.
We formulate this as the following corollary of Theorem \ref{theorem:core}.
\begin{corollary}
For a multivariate random variable $X = (X_1, \dots, X_k)$ with each $a_i \leq X_i \leq b_i$, the approximation $X^\star = (X_1^\star, \dots X_k^\star)$, 
where $X_i^\star$ are constructed according to Theorem \ref{theorem:core}, satisfies for any $p \in [1, \infty)$
\begin{equation}
\norm{X-X^\star}_p \leq r(\nA) \left( \frac{18 p } {p-1/2} \seqsep{\nA} \right)^\frac{1}{p}.
\end{equation}
\end{corollary}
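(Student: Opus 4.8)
The plan is to convert the tail bound from Property \ref{point:approx} of Theorem \ref{theorem:core} into a bound on the $p$-th moment of the average deviation, and then relate that to the $L_p$-norm of the vector $X - X^\star$. The natural starting point is the identity that for a nonnegative random variable $Y$ and any $p \geq 1$, $\EE{Y^p} = \int_0^\infty p \lambda^{p-1} \PP{Y > \lambda}\, \mathrm{d}\lambda$. I would apply this with $Y = \frac{1}{k}\sum_{i=1}^k \abs{X_i - X_i^\star}$, since Theorem \ref{theorem:core} gives me precisely a tail bound on this quantity.

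The key technical point is that the tail bound $\PP{Y > \lambda} \leq 18 k\, \seqsep{\nA}\sqrt{r(\nA)/\lambda}$ decays like $\lambda^{-1/2}$, which is integrable against $\lambda^{p-1}$ near infinity only up to a point: the integrand behaves like $\lambda^{p - 3/2}$, which is fine at infinity but the tail bound exceeds $1$ for small $\lambda$. So I would split the integral at the threshold $\lambda_0$ where the bound equals $1$, namely $\lambda_0 = (18k\,\seqsep{\nA})^2 r(\nA)$. On $[0,\lambda_0]$ I use the trivial bound $\PP{Y > \lambda} \leq 1$, contributing $\lambda_0^p$; on $[\lambda_0,\infty)$ I use the decay bound. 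Here I should be careful: the integral $\int_{\lambda_0}^\infty p\lambda^{p-1}\cdot C\lambda^{-1/2}\,\mathrm{d}\lambda$ with $C = 18k\seqsep{\nA}\sqrt{r(\nA)}$ diverges at infinity since $p - 3/2 > -1$ for $p \geq 1$, so a naive global split fails. The resolution is that $Y$ is bounded: since each $\abs{X_i - X_i^\star} \leq b_i - a_i \leq r(\nA)$, we have $Y \leq r(\nA)$ deterministically, so the integral truncates at $r(\nA)$ and both pieces are finite. Combining, $\EE{Y^p} \leq \lambda_0^p + \int_{\lambda_0}^{r(\nA)} p\lambda^{p-3/2}C\,\mathrm{d}\lambda$, and evaluating the second integral and simplifying using the relation between $\lambda_0$, $C$, and $r(\nA)$ should produce the stated $\frac{18p}{p-1/2}$ factor.

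The final step is bookkeeping: I must connect $\EE{Y^p}$ to $\norm{X - X^\star}_p$. Here I would interpret $\norm{X-X^\star}_p$ as the $L_p$-norm of the random vector, i.e. $\norm{X-X^\star}_p = \left(\EE{\frac{1}{k}\sum_i \abs{X_i - X_i^\star}^p}\right)^{1/p}$ or a closely related averaged quantity, so that $\norm{X-X^\star}_p = \EE{Y^p}^{1/p}$ up to the chosen normalization; taking $p$-th roots of the moment bound and extracting $r(\nA)$ as a common factor then yields the claimed inequality.

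The main obstacle I anticipate is pinning down the precise meaning of $\norm{X-X^\star}_p$ and ensuring the normalization matches, since the factor $r(\nA)$ out front and the $\seqsep{\nA}^{1/p}$ scaling are sensitive to whether one averages $\abs{X_i - X_i^\star}$ before or after raising to the $p$-th power. Getting the constant $\frac{18p}{p-1/2}$ exactly right will require the truncated-integral computation above to be done carefully, tracking how $\lambda_0$ and the upper truncation $r(\nA)$ interact; the cleanest route is likely to factor out $r(\nA)$ and $\seqsep{\nA}$ early and reduce everything to a dimensionless integral.
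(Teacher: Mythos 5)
Your overall machinery---the layer-cake identity $\EE{Y^p}=\int_0^\infty p\,t^{p-1}\PP{Y>t}\,dt$ with the integral truncated at $r(\nA)$ by boundedness---is exactly the paper's, but you apply it to the wrong random variable, and this is a genuine gap. You set $Y=\frac{1}{k}\sum_{i=1}^k\abs{X_i-X_i^\star}$ and propose to identify $\norm{X-X^\star}_p$ with $\EE{Y^p}^{1/p}$ (up to normalization). The paper's proof takes $\norm{X-X^\star}_p=\big(\EE{\sum_{i=1}^k\abs{X_i-X_i^\star}^p}\big)^{1/p}$, i.e.\ the $p$-th powers sit \emph{inside} the sum. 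By convexity of $t\mapsto t^p$ for $p\geq 1$, $Y^p\leq\frac{1}{k}\sum_i\abs{X_i-X_i^\star}^p$, so $\EE{Y^p}$ is (after normalization) a \emph{lower} bound for the quantity you need to control: an upper bound on $\EE{Y^p}$ proves nothing about the norm. Moreover, even granting your identification, the aggregated tail bound of Property \ref{point:approx} carries the constant $18k\seqsep{\nA}$, so your computation gives $\EE{Y^p}\leq\frac{18pk\seqsep{\nA}}{p-1/2}\,r(\nA)^p$, leaving a spurious factor $k^{1/p}$ that the stated bound does not contain.

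The paper avoids both problems by staying per-coordinate: it invokes the intermediate inequality \eqref{eq:copybound} from inside the proof of Theorem \ref{theorem:core}, namely $\PP{\abs{X_i-X_i^\star}>t}\leq 18\sqrt{(b_i-a_i)/t}\;\alpha(X_i|X_{i+1},\dots,X_k)$, applies the layer-cake formula to each $\abs{X_i-X_i^\star}$ separately over $[0,r(\nA)]$ to get $\EE{\abs{X_i-X_i^\star}^p}\leq\frac{18p\,\alpha(X_i|X_{i+1},\dots,X_k)}{p-1/2}\,r(\nA)^p$, and then sums over $i$ using $\sum_{i=1}^{k-1}\alpha(X_i|X_{i+1},\dots,X_k)=k\seqsep{\nA}$, which recombines into the separation coefficient without the loss your aggregated bound incurs. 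Two smaller remarks: your split at $\lambda_0$ is unnecessary, since for $p\geq 1$ the integrand $t^{p-3/2}$ is integrable at $0$, and integrating the decay bound over all of $[0,r(\nA)]$ is precisely what produces the clean constant $\frac{18p}{p-1/2}$; and your worry about normalization is well founded (the paper's own final display drops a factor $k$ relative to its written definition of the norm, consistent only with a $\frac{1}{k}$-normalized sum), but the resolution is the per-coordinate argument, not the aggregated one. A salvage along your lines---using $\frac{1}{k}\sum_i\abs{X_i-X_i^\star}^p\leq r(\nA)^{p-1}Y$ and bounding $\EE{Y}$ by integrating the Property \ref{point:approx} tail---does give a bound of the right shape, but with constant $36$ in place of $\frac{18p}{p-1/2}$ and with the factor $k$ retained.
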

\begin{proof}
Without loss of generality, we assume that the optimum at the definition of $\alpha$-separation coefficient is achieved by the identity mapping.
Then we have
\begin{align}
\EE{\abs{X_i-X_i^\star}^p} & = p \int_{0}^{\infty}t^{p-1}\PP{\abs{X_i-X_i^\star} > t} dt \\
& = p \int_{0}^{r(\nA)}t^{p-1}\PP{\abs{X_i-X_i^\star} > t} dt \\
& \leq p \int_{0}^{r(\nA)}t^{p-1} \frac{18 \alpha(X_i|X_{i+1}, \dots, X_n)\sqrt{r(\nA)}}{\sqrt{t}} dt \\
& = 18 p \alpha(X_i|X_{i+1}, \dots, X_n)r(\nA)^p \int_{0}^{1}t^{p-3/2} dt \\
& = \frac{18 p \alpha(X_i|X_{i+1}, \dots, X_n)r(\nA)^p} {p-1/2}.
\end{align}
Hence, we can conclude that
\begin{equation}
\norm{X-X^\star}_p = \left(\EE{\sum_{i=1}^{k} \abs{X_i - X_i^\star}^p }\right)^\frac{1}{p} \leq \left( \frac{18 p \seqsep{\nA}r(\nA)^p} {p-1/2} \right)^\frac{1}{p}.
\end{equation}
\end{proof}

\subsection{Concentration of variance}
In this section we study the concentration of functions of dependent random 
variables and, as an example, we prove a concentration bound on the variance.
For a set $\nA = \lbrace X_1, \dots, X_n \rbrace$ and a function $f:\nR\ra\nR$, 
we define the set $f(\nA) = \lbrace f(X_1), \dots, f(X_n) \rbrace$.
An important fact is that $\seqsep{f(\nA)} \leq \seqsep{\nA}$, which follows from the definition of the $\alpha$-dependence.
Therefore, the concentration bounds for $f(\nA)$ can be stated in terms of the dependence characteristics of the original set.
\begin{corollary}\label{cor:fA}
For any set $\nA = \lbrace X_1, \dots, X_n \rbrace$ and function $f: \nR \ra \nR$,
such that $a_i \leq f(X_i) \leq b_i$ for some $a_i,b_i\in\nR$. 
Then, for any $t > 0$,
\begin{equation}
\PP{\frac{1}{n}\sum_{i=1}^{n}f(X_i) - \frac{1}{n}\sum_{i=1}^{n}\EE{f(X_i)}} \leq \e{-\frac{2n^2(t-\lambda)^2}{\chi_\gamma(\nA) \sum_{i=1}^n(b_i-a_i)^2}} + 18n\gamma\sqrt{\frac{r(f(\nA))}{\lambda}}.
\end{equation}
\end{corollary}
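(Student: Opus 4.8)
The plan is to recognize Corollary~\ref{cor:fA} as a direct instance of Theorem~\ref{theorem:maincovering} applied to the transformed set $f(\nA) = \lbrace f(X_1), \dots, f(X_n)\rbrace$ rather than to $\nA$ itself. The key observation, already stated in the text preceding the corollary, is that $\seqsep{f(\nA)} \leq \seqsep{\nA}$, and more generally that applying a deterministic measurable function to each variable cannot increase any $\alpha$-dependence coefficient, since $\sigma(f(X_i)) \subseteq \sigma(X_i)$ and the supremum defining $\alpha(\cdot|\cdot)$ is then taken over a smaller collection of events. First I would make this monotonicity precise: for each $i$ and each conditioning set $J$, we have $\alpha(f(X_i) \mid \lbrace f(X_j), j\in J\rbrace) \leq \alpha(X_i \mid \lbrace X_j, j\in J\rbrace)$, which propagates through the definition of $\alpha$-separation and hence through the thresholded dependency graph.

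The central consequence of this monotonicity is that any soft cover of $\nA$ at threshold $\gamma$ is also a soft cover of $f(\nA)$ at the same threshold. Indeed, if $\seqsep{\nI_j} \leq \gamma$ for the original variables on index set $\nI_j$, then $\seqsep{\lbrace f(X_i), i\in\nI_j\rbrace} \leq \gamma$ as well. Therefore $\chi_\gamma(f(\nA)) \leq \chi_\gamma(\nA)$, so it is legitimate to use the cover count $\chi_\gamma(\nA)$ of the original set as a valid (if possibly loose) bound on the cover count of $f(\nA)$ in the exponent. With this in hand, I would simply invoke Theorem~\ref{theorem:maincovering} with $\nA$ replaced by $f(\nA)$: each $f(X_i)$ satisfies the range bound $a_i \leq f(X_i) \leq b_i$ by hypothesis, the maximum range becomes $r(f(\nA))$, and the $\alpha$-separation within each block is controlled by $\gamma$. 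The two terms of Theorem~\ref{theorem:maincovering} then produce exactly the claimed Gaussian-type exponential term with $\sum_i (b_i-a_i)^2$ and the additive term $18n\gamma\sqrt{r(f(\nA))/\lambda}$.

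The only genuine content beyond a black-box application is the monotonicity lemma $\seqsep{f(\nA)} \leq \seqsep{\nA}$ together with its upgrade to soft covers, and neither step is deep: both follow from the set-inclusion of generated $\sigma$-algebras. I do not anticipate a substantive obstacle here, but there is one cosmetic issue to flag, namely that the corollary's left-hand side as written omits the ``$> t$'' in the probability and does not display the parameters $\gamma$ and $\lambda$ in the quantifier; in writing the proof I would read it as the deviation event $\lbrace \frac{1}{n}\sum_i f(X_i) - \frac{1}{n}\sum_i \EE{f(X_i)} > t\rbrace$ and treat $\gamma > 0$ and $\lambda < t$ as free parameters inherited from Theorem~\ref{theorem:maincovering}. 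If one wished to state a bound purely in terms of $\chi_\gamma(f(\nA))$ the result would be at least as strong; phrasing it with $\chi_\gamma(\nA)$ is the natural choice precisely because it lets a practitioner reuse the dependency structure of the original variables without recomputing it for the transformed ones.
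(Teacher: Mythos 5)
Your proposal is correct and matches the paper's intended argument: the paper gives no separate proof, treating the corollary as immediate from the stated monotonicity fact $\seqsep{f(\nA)} \leq \seqsep{\nA}$ (via $\sigma(f(X_i)) \subseteq \sigma(X_i)$) combined with Theorem~\ref{theorem:maincovering} applied to $f(\nA)$, which is exactly what you spell out, including the transfer of soft covers giving $\chi_\gamma(f(\nA)) \leq \chi_\gamma(\nA)$. Your flagged cosmetic reading of the statement (the missing ``$> t$'' and the implicit quantification over $\gamma$ and $\lambda$) is also the right interpretation.
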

%
%
As a consequence, we can prove a concentration bound on the estimator of the variance.
\begin{theorem}\label{theorem:variancebound}
Let $\nA = \lbrace X_1, \dots, X_n \rbrace$ be a set of random variables, 
where all $X_i$ take values in $[0,1]$ and have the same mean $\mu$ and variance $\sigma^2$. 
We define $\hat{\mu} = \frac{1}{n}\sum_{i=1}^{n}X_i$ and $\hat{\sigma}^2 = \frac{1}{n}\sum_{i=1}^{n}(X_i - \hat{\mu})^2$.
%
Then, for any $t>0$,
\begin{equation}
\PP{\hat{\sigma}^2 - \sigma^2 > t} \leq 2\e{-\frac{n(t-\lambda)^2}{8\chi_\gamma(\nA)}} + \frac{36n\gamma}{\sqrt{\lambda}}.
\end{equation}
\end{theorem}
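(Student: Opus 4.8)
The plan is to reduce the upper-tail event for $\hat\sigma^2$ to a single concentration statement for an empirical average of a fixed function of the $X_i$, so that Corollary~\ref{cor:fA} can be invoked directly. The starting point is the elementary bias identity. Writing $\tilde\sigma^2 = \frac{1}{n}\sum_{i=1}^n (X_i - \mu)^2$ and expanding $(X_i - \hat\mu)^2 = (X_i-\mu)^2 + 2(X_i-\mu)(\mu-\hat\mu) + (\mu-\hat\mu)^2$, the cross term sums to $-2(\hat\mu-\mu)^2$ because $\frac1n\sum_i(X_i-\mu)=\hat\mu-\mu$, giving
\begin{equation}
\hat\sigma^2 = \tilde\sigma^2 - (\hat\mu - \mu)^2 \leq \tilde\sigma^2 .
\end{equation}
Since the discarded term is nonnegative, the event $\{\hat\sigma^2 - \sigma^2 > t\}$ is contained in $\{\tilde\sigma^2 - \sigma^2 > t\}$, so it suffices to bound $\PP{\tilde\sigma^2 - \sigma^2 > t}$. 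This is precisely the feature that makes the upper tail easy: the sample variance is biased downward relative to $\tilde\sigma^2$.

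Next I would recognize $\tilde\sigma^2$ as an empirical average of $f(X_i)$ for the fixed, deterministic function $f(x) = (x-\mu)^2$. Here $\mu$ is an unknown but fixed constant, so $f$ is an admissible fixed function and $\EE{f(X_i)} = \var{X_i} = \sigma^2$, i.e. $\tilde\sigma^2 - \sigma^2 = \frac1n\sum_{i=1}^n f(X_i) - \frac1n\sum_{i=1}^n \EE{f(X_i)}$. Because $X_i,\mu\in[0,1]$ we have $f(X_i)\in[0,1]$, hence in the notation of Corollary~\ref{cor:fA} one may take $b_i-a_i\le 1$, so that $\sum_{i=1}^n(b_i-a_i)^2\le n$ and $r(f(\nA))\le 1$. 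The dependency bookkeeping is routine: by the monotonicity $\seqsep{f(\nA)}\le\seqsep{\nA}$ recorded before Corollary~\ref{cor:fA} (which applies to every subset), any $\gamma$-soft cover of $\nA$ is also a $\gamma$-soft cover of $f(\nA)$, so $\chi_\gamma(f(\nA))\le\chi_\gamma(\nA)$ and the bound may be stated with $\chi_\gamma(\nA)$.

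Applying Corollary~\ref{cor:fA} (equivalently, Theorem~\ref{theorem:maincovering} to the set $f(\nA)$) with deviation $t$ then yields
\begin{equation}
\PP{\hat\sigma^2 - \sigma^2 > t} \leq \PP{\tilde\sigma^2 - \sigma^2 > t} \leq \e{-\frac{2n(t-\lambda)^2}{\chi_\gamma(\nA)}} + \frac{18 n \gamma}{\sqrt{\lambda}} .
\end{equation}
Finally, since $\frac{2n(t-\lambda)^2}{\chi_\gamma(\nA)} \ge \frac{n(t-\lambda)^2}{8\chi_\gamma(\nA)}$ and $18 \le 36$, this right-hand side is dominated by $2\,\e{-n(t-\lambda)^2/(8\chi_\gamma(\nA))} + 36 n\gamma/\sqrt{\lambda}$, which is the claimed bound. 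The only conceptual step — and thus the main thing to get right — is spotting the bias identity and observing that it collapses the \emph{upper} tail to a single invocation of the already-proven concentration corollary; everything else (the range accounting $f(X_i)\in[0,1]$ and the subset-monotonicity of $\alpha$-separation) is elementary. I note in passing that this argument does not force the stated constants: the factor $2$ and $36=2\cdot18$ are slack, and presumably arise from a coarser two-term split of $\hat\sigma^2-\sigma^2$ into separate deviations of the second moment and of the mean.
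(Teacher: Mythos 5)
Your proof is correct, and it takes a genuinely different --- and in fact sharper --- route than the paper's. The paper's own proof expands $\hat{\sigma}^2 = \frac{1}{n}\sum_{i=1}^n X_i^2 - \hat{\mu}^2$ and $\sigma^2 = \EE{X_1^2} - \mu^2$ and union-bounds over two events, $\lbrace \frac{1}{n}\sum_i X_i^2 - \EE{X_1^2} > \frac{t}{2} \rbrace$ and $\lbrace \hat{\mu}^2 - \mu^2 > \frac{t}{2} \rbrace$ (the latter reduced to $\lbrace \hat{\mu} - \mu > \frac{t}{4} \rbrace$ since $\hat{\mu}+\mu \leq 2$), then applies Corollary~\ref{cor:fA} with $f(x)=x^2$ to the first and Theorem~\ref{theorem:maincovering} to the second; this two-term split is exactly the source of the prefactor $2$ and of $36 = 2\cdot 18$, and the $\frac{1}{8}$ in the exponent comes from the $\frac{t}{4}$ deviation of the mean --- precisely as you conjectured in your closing remark. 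Your bias identity $\hat{\sigma}^2 = \tilde{\sigma}^2 - (\hat{\mu}-\mu)^2 \leq \tilde{\sigma}^2$ instead collapses the upper tail to a single application of Corollary~\ref{cor:fA} with the fixed function $f(x) = (x-\mu)^2$ (admissible because $\mu$ is a deterministic parameter, with $f(X_i) \in [0,1]$ so $\sum_i (b_i - a_i)^2 \leq n$ and $r(f(\nA)) \leq 1$, and with $\chi_\gamma(f(\nA)) \leq \chi_\gamma(\nA)$ by the subset-wise monotonicity $\seqsep{f(\nI)} \leq \seqsep{\nI}$), yielding $\e{-2n(t-\lambda)^2/\chi_\gamma(\nA)} + 18n\gamma/\sqrt{\lambda}$, which strictly dominates the claimed right-hand side. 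The trade-off is worth noting: your one-shot argument gives a better exponent and half the additive dependency term, but it exploits the sign of the bias and so works only for the upper tail; the paper's coarser split adapts immediately to the lower tail $\PP{\sigma^2 - \hat{\sigma}^2 > t}$ (where $\hat{\sigma}^2 \leq \tilde{\sigma}^2$ points the wrong way and you would additionally need a two-sided bound on $(\hat{\mu}-\mu)^2$), in line with the paper's remark that its concentration results hold for deviations in both directions.
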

\begin{proof}
First, we notice that $\hat{\sigma}^2 = \frac{1}{n}\sum_{i=1}^{n} X_i^2 - \hat{\mu}^2$ and $\sigma = \EE{X_1^2} - \mu^2$.
Therefore,
\begin{align}
\PP{\hat{\sigma}^2 - \sigma^2 > t} & \leq \PP{\frac{1}{n}\sum_{i=1}^{n} X_i^2 - \EE{X_1^2} > \frac{t}{2}} + \PP{\hat{\mu}^2 - \mu^2 > \frac{t}{2}} \\
& \leq \PP{\frac{1}{n}\sum_{i=1}^{n} X_i^2 - \EE{X_1^2} > \frac{t}{2}} + \PP{\hat{\mu} - \mu > \frac{t}{4}}.
\end{align}
We apply Corollary \ref{cor:fA} for the first summand and Theorem \ref{theorem:maincovering} for the second.
\end{proof}

\subsection{Supremum of Lipschitz functions}
This example illustrates an extension to the concentration of different functions of the sample.
In particular, we consider the supremum over a class of Lipschitz functions.
The expressions of this type appear, \eg, in machine learning theory and can be used to prove Rademacher complexity bounds~\citep{shalev2014understanding}.

We give an example using the regular covers to simplify the presentation.
The analogous result can be shown for the fractional covers as well.
For a set $\nA = \lbrace X_1, \dots, X_k \rbrace$, an optimal soft cover $\lbrace \nI_j \rbrace$ with threshold $\gamma$ and a function class $\nF$ define
\begin{equation}
\mathcal{R}_\gamma = \frac{1}{n} \sum_j \EEd{\sup_{f\in\nF} \sum_{i\in\nI_j} (f(X_i) - \EE{X_i})}{\otimes},
\end{equation}
where $\EEdnp{}{\otimes}$ means that the expectation is taken with respect to the distribution constructed as a product of marginals, \ie treating each variable as independent.
The next theorem shows that the supremum of Lipschitz functions concentrates around the $\mathcal{R}_\gamma$.
\begin{theorem}
Let $\nF \subset \left\lbrace f: \nR \ra [0,B] : \abs{f(x) - f(y)} \leq L \abs{x-y} \right\rbrace$ be a set of $L$-Lipschitz functions.
For a fixed set of random variables $\nA = \left\lbrace X_1, \dots, X_n \right\rbrace $ introduce
\begin{equation}
\Phi(\nA) = \sup_{f\in\nF}\frac{1}{n}\sum_{i=1}^n(f(X_i) - \EE{f(X_i)}).
\end{equation}
Then for any threshold $\gamma >0$ and $t\in[0, B]$
\begin{equation}
\PP{ \Phi(\nA) - \mathcal{R}_\gamma  > t } \leq \e{ -\frac{ n t^2}{2\chi_\gamma(\nA)B^2} } + 18 \gamma n\sqrt{\frac{2Lr(\nA)}{t}}.
\end{equation}

\end{theorem}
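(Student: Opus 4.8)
The plan is to follow the architecture of the proof of Theorem~\ref{theorem:maincovering}: couple $\nA$ to a blockwise-independent approximation through the Approximation Theorem, transfer the supremum across the coupling using the Lipschitz property, and finish with a bounded-differences concentration inequality that respects the cover structure. First I would fix an optimal soft cover and, refining it if necessary, assume it is a partition $[n]=\bigsqcup_j \nI_j$ into $\chi_\gamma(\nA)$ sets, each still $\gamma$-independent. Applying Theorem~\ref{theorem:core} separately to each block $\nI_j$ produces variables $\{X_i^\star\}$ that are independent within each block, carry the marginals of the $X_i$, and obey the block-wise deviation bound of that theorem.

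The transfer step comes next. Writing $\Phi^\star=\sup_{f\in\nF}\frac1n\sum_i(f(X_i^\star)-\EE{f(X_i)})$ and using $L$-Lipschitzness of each $f$ together with the identical marginals, for every fixed $f$ one adds and subtracts $f(X_i^\star)$ to obtain $\Phi(\nA)\le \Phi^\star+\frac{L}{n}\sum_i \abs{X_i-X_i^\star}$. I would introduce the events $E_j=\{\frac{1}{\abs{\nI_j}}\sum_{i\in\nI_j}\abs{X_i-X_i^\star}<\lambda\}$ with the choice $\lambda=t/(2L)$; by Theorem~\ref{theorem:core} and $\gamma$-independence, $\PP{E_j^c}\le 18\abs{\nI_j}\gamma\sqrt{r(\nA)/\lambda}$, and since the blocks partition $[n]$ the complement of $E=\bigcap_j E_j$ has probability at most $18n\gamma\sqrt{r(\nA)/\lambda}=18n\gamma\sqrt{2Lr(\nA)/t}$, which is exactly the additive term. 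On $E$ the averaged $L_1$ distance is below $\lambda$, so $\Phi(\nA)-\Phi^\star\le L\lambda=t/2$, whence $\{\Phi(\nA)-\mathcal{R}_\gamma>t\}\cap E\subseteq\{\Phi^\star-\mathcal{R}_\gamma>t/2\}$.

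It then remains to control $\PP{\Phi^\star-\mathcal{R}_\gamma>t/2}$. By subadditivity of the supremum over the partition, $\Phi^\star\le\frac1n\sum_j\Phi_j^\star$ with $\Phi_j^\star=\sup_f\sum_{i\in\nI_j}(f(X_i^\star)-\EE{f(X_i^\star)})$; taking expectations and noting that within each block the law of $\{X_i^\star\}$ is precisely the product of marginals appearing in the definition of $\mathcal{R}_\gamma$, I get $\EE{\Phi^\star}\le\mathcal{R}_\gamma$, so $\PP{\Phi^\star-\mathcal{R}_\gamma>t/2}\le\PP{\Phi^\star-\EE{\Phi^\star}>t/2}$. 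Finally, $\Phi^\star$ is a function of $\{X_i^\star\}$ with per-coordinate bounded differences $B/n$, and $\{X_i^\star\}$ admits a proper cover of size $\chi_\gamma(\nA)$; invoking the dependency-graph bounded-differences inequality (the supremum analogue of Theorem~2.1 of \citep{Janson01}, with $\sum_i c_i^2=B^2/n$) gives $\PP{\Phi^\star-\EE{\Phi^\star}>t/2}\le\exp\!\big(-nt^2/(2\chi_\gamma(\nA)B^2)\big)$, matching the claimed exponent.

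The main obstacle is this last step: Theorem~2.1 of \citep{Janson01} as quoted is stated for \emph{sums}, whereas $\Phi^\star$ is a \emph{supremum}, so I must establish (or cite) a McDiarmid-type inequality for general bounded-differences functions under a dependency-graph cover. A naive per-block Hoeffding bound on $\frac1n\sum_j(\Phi_j^\star-\EE{\Phi_j^\star})$ will not suffice, since the range of a block-supremum scales like $\abs{\nI_j}B$ rather than $B$ and thereby loses the factor $\chi_\gamma(\nA)$; the correct exponent needs the fine-grained per-coordinate constants combined with the coloring, which I would obtain via a Doob martingale revealing one colour class at a time and exploiting within-class independence. A secondary technical point is checking that refining a soft cover into a partition preserves $\gamma$-independence, which I expect to follow from the monotonicity of $\alpha$-dependence under shrinking the conditioning set.
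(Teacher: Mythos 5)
Your proposal reproduces the paper's proof almost step for step: the paper likewise applies Theorem~\ref{theorem:core} block-wise to an optimal soft cover, uses $\Phi(\nA) \leq \Phi(\nA^\star) + \sup_{f\in\nF}\frac{1}{n}\sum_{i=1}^n(f(X_i)-f(X_i^\star))$ together with $L$-Lipschitzness and the choice $\lambda = t/(2L)$ to produce the additive term $18\gamma n\sqrt{2Lr(\nA)/t}$, and then bounds $\Phi(\nA^\star) \leq \frac{1}{n}\sum_j \Phi_j^\star$ with $\Phi_j^\star = \sup_{f\in\nF}\sum_{i\in\nI_j}(f(X_i^\star)-\EE{f(X_i)})$. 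The only divergence is the final concentration step: the obstacle you correctly identify (Janson's Theorem~2.1 is for sums, not suprema) is resolved in the paper simply by citing Theorem~2 of \citep{usunier2005generalization}, which is exactly the chromatic McDiarmid-type inequality for functions that decompose as sums of bounded-difference functions, each depending only on one independent class; applied to $\frac{1}{n}\sum_j\Phi_j^\star$ with per-coordinate constants $B/n$ it yields $\e{-nt^2/(2\chi_\gamma(\nA)B^2)}$, matching your arithmetic.

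Two cautions on your sketched repair of that step. First, you propose to apply a chromatic bounded-differences inequality to $\Phi^\star$ \emph{itself}; such an inequality for a general bounded-difference function of covered variables (arbitrary cross-class dependence) is not available with the chromatic constant, and your Doob martingale revealing one colour class at a time does not obviously deliver it: since the classes are dependent across, the increment from revealing class $j$ is contaminated by shifts in the conditional expectations of the other block-suprema and is not controlled by the class-$j$ coordinates alone. The fix is to apply subadditivity \emph{before} concentrating, i.e. bound $\PP{\Phi^\star - \mathcal{R}_\gamma > t/2} \leq \PP{\frac{1}{n}\sum_j \Phi_j^\star - \EE{\frac{1}{n}\sum_j \Phi_j^\star} > t/2}$, and then control the moment generating function by Jensen/H\"older across colour classes, bounding each within-class MGF by the McDiarmid MGF bound for independent variables --- this is precisely Usunier's proof, and it needs the decomposed sum, not $\Phi^\star$. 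Second, your claim that refining a soft cover into a partition preserves $\gamma$-independence by ``monotonicity of $\alpha$-dependence'' is delicate: $\seqsep{\cdot}$ is normalized by the set size, so while each coefficient $\alpha(X_{\pi(i)}|\cdot)$ shrinks when the set shrinks, the prefactor $1/k$ grows, and $\seqsep{\nI'} \leq \seqsep{\nI}$ can fail for $\nI' \subseteq \nI$. What is genuinely monotone (keeping the induced order) is the unnormalized sum $k\,\seqsep{\cdot}$, which is all the additive term actually requires --- the paper's own proof of Theorem~\ref{theorem:maincovering} silently takes $\sum_j |\nI_j| = n$ and shares this wrinkle, so your argument is no worse off, but the justification should go through the unnormalized quantity.
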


%
\begin{proof}
For the proof we take the set $\nA^\star = \lbrace X_1^\star, \dots, X_1^\star \rbrace$ constructed as in Theorem \ref{theorem:maincovering}.
The proof follows by observing that $\Phi(\nA) \leq \Phi(\nA^\star)  + \sup_{f\in\nF} \frac{1}{n}\sum_{i=1}^n(f(X_i) - f(X^\star_i))$ and, hence, for any $0 < \lambda < t$
\begin{equation}
\PP{ \Phi(\nA) - \mathcal{R}_\gamma > t } \leq \PP{ \Phi(\nA^\star) - \mathcal{R}_\gamma > t - L\lambda } + 18 \gamma n \sqrt{\frac{r(\nA)}{\lambda}}.
\end{equation}
We can further upper bound $\Phi(\nA^\star) \leq \frac{1}{n} \sum_j \sup_{f\in\nF} \sum_{i\in\nI_j} (f(X^\star_i) - \EE{X_i})$ and then the proof follows from Theorem 2 of \citep{usunier2005generalization} and by setting $\lambda = \frac{t}{2L}$.
\end{proof}
\section{Applications}\label{sec:applications}
%
In this section we present three applications of our analysis to different examples of dependent random variables. 
For simplicity, we use only regular covers and Theorem \ref{theorem:maincovering}. Analogous results 
for fractional covers can be achieved by use of Theorem \ref{theorem:mainfractional}.

\subsection{Mixing processes}\label{sec:mixing}
\begin{figure}
\newcommand{\boxnum}{13}
\begin{center}
	\begin{tikzpicture}
	[
	style1/.style={circle,draw,minimum size=15,pattern=north west lines},
	style2/.style={circle,draw,minimum size=15},
	style3/.style={circle,draw,minimum size=15,pattern=horizontal lines},
	style4/.style={circle,draw,minimum size=15},
	style5/.style={circle,draw,minimum size=15,pattern=north east lines},
	style6/.style={circle,draw,minimum size=15},
	style7/.style={circle,draw,minimum size=15,pattern=dots},
	]
	\node at (-1.5,0.2) {(a)};
	
	\pgfmathsetmacro{\length}{10}
	\pgfmathsetmacro{\dashes}{0.1}
	\pgfmathsetmacro{\boxheight}{0.5}
	\pgfmathtruncatemacro{\n}{\boxnum}
	\pgfmathsetmacro{\boxlength}{\length/\n}
	
	\foreach \i in {0,1,...,\n}{
		\pgfmathtruncatemacro{\ii}{\i+1}
		\pgfmathtruncatemacro{\st}{Mod(\i/2, 7)+1}
		\node[style\st] (\i) at (\i*0.7,0) {};
	}

	\pgfmathtruncatemacro{\nn}{\n-1}
	
	\foreach \i in {0,1,...,\nn}{
		\pgfmathtruncatemacro{\ii}{\i+1}
		\draw[line width=1pt,out=0,in=180] (\i) to node[midway,above] {} (\ii);
	}
	
	\end{tikzpicture}
	
	\begin{tikzpicture}
	[
		style1/.style={circle,draw,minimum size=15,pattern=north west lines},
		style2/.style={circle,draw,minimum size=15,pattern=horizontal lines},
		style3/.style={circle,draw,minimum size=15,pattern=dots}
	]
	\node at (-1.5,0.2) {(b)};
	
	\pgfmathsetmacro{\length}{10}
	\pgfmathsetmacro{\dashes}{0.1}
	\pgfmathsetmacro{\boxheight}{0.5}
	\pgfmathtruncatemacro{\n}{\boxnum}
	\pgfmathsetmacro{\boxlength}{\length/\n}
	
	\foreach \i in {0,1,...,\n}{
		\pgfmathtruncatemacro{\ii}{\i+1}
		\pgfmathtruncatemacro{\st}{Mod(\i, 3)+1}
		\node[style\st] (\i) at (\i*0.7,0) {};
	}

	\pgfmathtruncatemacro{\nn}{\n-1}
	
	\foreach \i in {0,1,...,\nn}{
		\pgfmathtruncatemacro{\ii}{\i+1}
		\draw[line width=1pt,out=0,in=180] (\i) to node[midway,above] {} (\ii);
	}
	
	\end{tikzpicture}
	
	\begin{tikzpicture}
	
	\node at (-1.9,0.4) {};
	
	\pgfmathsetmacro{\length}{10}
	\pgfmathsetmacro{\dashes}{0.1}
	\pgfmathsetmacro{\boxheight}{0.5}
	\pgfmathtruncatemacro{\n}{\boxnum}
	\pgfmathsetmacro{\boxlength}{\length/\n}
	
	\foreach \i in {0,1,...,\n}{
		\pgfmathtruncatemacro{\ii}{\i+1}
		\node at (\i*0.7, 0) {${X}_{\ii}$};
	}
	
	\end{tikzpicture}
\end{center}
\caption{A difference between (a) the splits of a standard independent block technique and (b) ours. Observations patterned the same way are assigned to the same block. Observations with no pattern are not assigned to anything (discarded by the method (a)).}
\label{figure:blocks}
\end{figure}
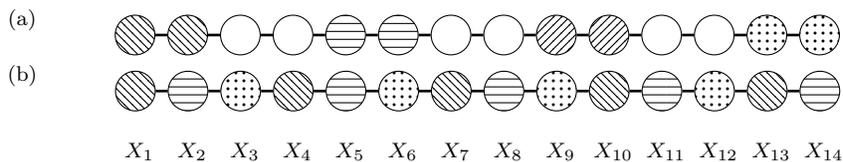

In this section we assume that $X_1, X_2, \dots$ is a realization of a stationary stochastic process,
where \emph{stationarity} means that for all $j \geq 1$ the vector $(X_1, \dots, X_j)$ has the same distribution as $(X_{i+1}, \dots, X_{i+j})$ for all $i \geq 0$.
The following $\alpha$-mixing coefficients are traditionally used to quantify the dependence between the past and the future of the process~\citep{Bradley01}.
\begin{equation}
\alpha_k = \sup_{j \geq 1} \alpha( \,X_1, \dots, X_j \,|\, X_{j+k}, \dots\,).
\end{equation}
A process is called \emph{$\alpha$-mixing} if $\alpha_k$ vanishes in the limit. 

The most popular approach to deal with dependencies in mixing processes is an 
independent block technique~\citep{YuBin01} that approximates the process with a sequence 
of independent blocks.
It is important to note that these independent blocks are understood as 
being independent of each other.

In this section we introduce a different construction based on blocks 
of variables. The blocks we use have orthogonal characteristics to the 
classical ones: variables are independent within the blocks, not across 
them, but dependences can exist between blocks. 
%
The difference is illustrated in the Figure \ref{figure:blocks}.
In (a) we can see the split made by an independent block technique.
The observations are split into contiguous blocks and the blocks are separated from 
each other by a required gap ($2$ in this example). Consequently, to achieve small 
dependence between the blocks, the construction ignores around the half of the observations.
In our construction, (b), we put all observations in one block that are separated 
by a given margin. Thereby, we achieve the small dependence within each block.

Formally, we divide a sample of size $n$ into $\nu$ blocks of length $\mu$, 
such that each block includes variables separated 
by margin $\nu$, \ie the $j$-th block is $\nA_j = (X_j, X_{j+\nu}, \dots, X_{j+(\mu-1)\nu})$.
The first thing to note is that the $\alpha$-separation coefficients of these blocks are related to the $\alpha$-mixing coefficients,
namely $\seqsep{\nA_j} \leq \alpha_\nu$ and therefore $\chi_{\alpha_\nu} \leq \nu$. 
Using Theorem~\ref{theorem:maincovering} with $\gamma = \alpha_\nu$ and $\lambda = t/2$, we get 
the following concentration inequality
\begin{corollary}\label{cor:app_mixing}
Let $X_1, \dots, X_n$ be a realization of a stationary stochastic process with $X_i \in [0,1]$. Then for $t \in [0,1]$
\begin{equation}
\PP{\frac{1}{n}\sum_{i=1}^n X_i - \EE{X_1} > t } \leq \e{ - \frac{\mu t^2}{2} } + \frac{18\sqrt{2}}{\sqrt{t}}n\alpha_{\nu}.
\end{equation}
\end{corollary}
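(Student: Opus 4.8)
The plan is to realize $\nA=\{X_1,\dots,X_n\}$ as the union of the $\nu$ blocks $\nA_j=(X_j,X_{j+\nu},\dots,X_{j+(\mu-1)\nu})$ described above (with $n=\mu\nu$), verify that this family is a soft cover of threshold $\gamma=\alpha_\nu$, and then invoke Theorem~\ref{theorem:maincovering}. That $\{\nA_1,\dots,\nA_\nu\}$ covers $[n]$ is immediate, since writing any index $i$ as $i=j+(l-1)\nu$ with $1\le j\le\nu$ and $1\le l\le\mu$ assigns it to exactly one block. So the only substantive point is the $\gamma$-independence of each block, i.e.\ $\seqsep{\nA_j}\le\alpha_\nu$.

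For the latter I would evaluate the minimum in the definition of $\seqsep{\nA_j}$ at the permutation that keeps the natural increasing-index order of the block, which only gives an upper bound on the true minimum. The $l$-th summand is then $\alpha\big(X_{j+(l-1)\nu}\,\big|\,\{X_{j+l\nu},\dots,X_{j+(\mu-1)\nu}\}\big)$. Writing $m=j+(l-1)\nu$, the conditioning variables all carry indices $\ge m+\nu$, so $\sigma(X_{j+l\nu},\dots,X_{j+(\mu-1)\nu})\subseteq\sigma(X_{m+\nu},X_{m+\nu+1},\dots)$, while $\sigma(X_m)\subseteq\sigma(X_1,\dots,X_m)$. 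Since $\alpha(\nB|\nC)$ is monotone under enlarging either $\sigma$-algebra (the supremum in its definition then runs over larger sets), each summand is bounded by $\alpha(X_1,\dots,X_m\mid X_{m+\nu},\dots)$, which is one of the terms in the supremum defining $\alpha_\nu$ and hence is $\le\alpha_\nu$. Averaging the $\mu-1$ summands over $\mu$ yields $\seqsep{\nA_j}\le\frac{\mu-1}{\mu}\alpha_\nu\le\alpha_\nu$, so $\{\nA_j\}$ is a soft cover and $\chi_{\alpha_\nu}(\nA)\le\nu$.

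It then remains to substitute into Theorem~\ref{theorem:maincovering} with $\gamma=\alpha_\nu$ and $\lambda=t/2$. Stationarity gives $\frac1n\sum_{i=1}^n\EE{X_i}=\EE{X_1}$; since $X_i\in[0,1]$ we have $b_i-a_i=1$, so $\sum_{i=1}^n(b_i-a_i)^2=n$ and $r(\nA)=1$. The exponent becomes $\frac{2n^2(t/2)^2}{\chi_{\alpha_\nu}(\nA)\,n}=\frac{nt^2}{2\,\chi_{\alpha_\nu}(\nA)}\ge\frac{nt^2}{2\nu}=\frac{\mu t^2}{2}$, using $\chi_{\alpha_\nu}(\nA)\le\nu$ and $n=\mu\nu$, while the additive term is $18n\alpha_\nu\sqrt{1/(t/2)}=\frac{18\sqrt2}{\sqrt t}\,n\alpha_\nu$; combining the two yields the claim.

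I expect the monotonicity step bounding $\seqsep{\nA_j}\le\alpha_\nu$ to be the main obstacle: one must pass from the single-variable-versus-finite-set coefficient appearing in the $\alpha$-separation to the past-versus-entire-future form used in the definition of $\alpha_\nu$, and this rests entirely on the fact that $\alpha(\nB|\nC)$ increases when either argument is enlarged. Everything after that is bookkeeping with the parameter choices $\gamma=\alpha_\nu$, $\lambda=t/2$ and the relation $n=\mu\nu$.
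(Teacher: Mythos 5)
Your proposal is correct and takes essentially the same route as the paper: the identical blocks $\nA_j=(X_j,X_{j+\nu},\dots,X_{j+(\mu-1)\nu})$ with $n=\mu\nu$, the bound $\seqsep{\nA_j}\le\alpha_\nu$ yielding $\chi_{\alpha_\nu}(\nA)\le\nu$, and Theorem~\ref{theorem:maincovering} applied with $\gamma=\alpha_\nu$ and $\lambda=t/2$. If anything, you give more detail than the paper, which asserts $\seqsep{\nA_j}\le\alpha_\nu$ without spelling out the monotonicity of $\alpha(\nB|\nC)$ under enlargement of either $\sigma$-algebra that you correctly identify as the crux.
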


Similar results were proven specifically for stationary processes using the independent block technique. 
For example, Theorem 1.3 from \citep{bosq2012nonparametric} states that for a process 
with $X_i \in [0, 1]$ for each integer $\mu \in [1, \frac{n}{2}]$ and each $t > 0$ with $\nu = \lfloor \frac{n}{2\mu} \rfloor$:
\begin{equation}
\PP{\frac{1}{n}\sum_{i=1}^n X_i - \EE{X_1} > t} \leq 4 \e{-\frac{\mu t^2}{8}} + 22\sqrt{1+\frac{4}{t}}\mu \alpha_{\nu}.
\end{equation}
We can see that both bounds have the same functional form.
Therefore, Theorem \ref{theorem:maincovering} can be seen as a generalization 
of existing concentration results from mixing processes to general sets of random variables.

%
%
\subsection{Lattice models}
Lattice models are used extensively in physics to study different aspects of statistical mechanics, most notably the phenomenon of phase transitions.
For a formal definition, we consider $\sigma$, a configuration of spins indexed by $\bbZ^d$, where each $\sigma_z \in \lbrace \pm 1 \rbrace$ for any site $z \in \bbZ^d$.
Fixing a finite domain $\Delta \subseteq \bbZ^d$, we use a Hamiltonian function $H: \lbrace \pm 1 \rbrace^\Delta \ra \nR_+$ to define the distribution on the set of configurations over $\Delta$ as 
\begin{equation}
\PP{\sigma} = \frac{e^{-\beta H(\sigma)}}{\sum_{\sigma'\in \lbrace \pm 1 \rbrace^\Delta}e^{-\beta H(\sigma')}},
\end{equation}
with a parameter $\beta > 0$ called the inverse temperature.
One example is the Ising model, in which neighboring sites are encouraged to have the same sign,
by the choice $H(\sigma) = \sum_{i,j\in\bbZ^d: \norm{i-j}=1}\sigma_i \sigma_j$ with some fixed boundary condition.

There is a vast literature on the correlations between different sites and their relation to the site distances.
For different Hamiltonians and different temperature regimes the correlations decrease with different rates.
We consider the situations, when the rate is exponential, which is true, for example, for the very general setting with low enough inverse temperature, \eg \citep[Theorem V.2.1]{simon2014statistical}.
Formally, this means that for any sites $z_1, \dots, z_k \in \Delta$ such that $\min \lbrace \norm{z_1- z_2}_1, \dots, \norm{z_1- z_k}_1 \rbrace \geq D$:
\begin{equation}\label{eq:isingcorrelations}
\abs{ \EEbb{\prod_{i=1}^{k} \sigma_{z_i}} - \EEbb{\sigma_{z_1}}\EEbb{\prod_{i=2}^{k} \sigma_{z_i}} } \leq g(k) e^{-\lambda D},
\end{equation}
where $g(k)$ is some polynomial of $k$ and $\lambda > 0$ is a constant that can depend on the inverse temperature.

The following lemma bounds the $\alpha$-separation coefficient between spins in a lattice model using the above bound on their correlations.
\begin{lemma}\label{lemma:isingcorrelations}
Assume a lattice model that fulfills the condition~\eqref{eq:isingcorrelations}.
Then, for any set of sites $z_1, \dots, z_k \in \Delta$ with the property 
that $\norm{z_i- z_j}_1 \geq D$ for all $i\neq j$, 
we have the following bound with $f$ being a polynomial function
\begin{equation}
\seqsep{\lbrace \sigma_{z_1}, \dots, \sigma_{z_k} \rbrace} \leq f(k) e^{-\lambda D}.
\end{equation}
\end{lemma}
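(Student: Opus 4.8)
The plan is to bound each $\alpha$-dependence term appearing in the definition~\eqref{eq:seq-alpha-sep} of $\seqsep{\cdot}$ and then sum. Since $\seqsep{\cdot}$ is a minimum over permutations, it suffices to control the identity ordering, i.e.
\begin{equation}
\seqsep{\lbrace \sigma_{z_1}, \dots, \sigma_{z_k}\rbrace} \leq \frac{1}{k}\sum_{i=1}^{k-1}\alpha\big(\sigma_{z_i}\,\big|\, \sigma_{z_{i+1}}, \dots, \sigma_{z_k}\big).
\end{equation}
Because all pairwise distances satisfy $\norm{z_i - z_j}_1 \geq D$, in each term the target spin is at distance at least $D$ from every spin it is conditioned on. Hence it is enough to establish a single generic estimate $\alpha(\sigma_{z_1}\,|\, \sigma_{z_2}, \dots, \sigma_{z_m}) \leq h(m)\,e^{-\lambda D}$ for a polynomial $h$, and then combine the $k-1$ terms with the prefactor $1/k$.

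For a fixed target, I would first recast the $\alpha$-dependence of a single $\pm 1$ spin against $Y = (\sigma_{z_2}, \dots, \sigma_{z_m})$ so that correlations appear explicitly. Since $\sigma(\sigma_{z_1})$ is generated by $\lbrace \sigma_{z_1} = 1\rbrace$ and $\sigma(Y)$ is atomic with atoms $\lbrace Y = y\rbrace$, optimizing over the event in the supremum collapses the coefficient to an $L^1$-type quantity,
\begin{equation}
\alpha\big(\sigma_{z_1}\,\big|\, Y\big) = \frac{1}{4}\sum_{y}\absBig{\covarBig{\sigma_{z_1}}{\Ind{Y = y}}}.
\end{equation}
I would then substitute the algebraic (Walsh) expansion $\Ind{Y = y} = 2^{-(m-1)}\sum_{S\subseteq\lbrace 2,\dots,m\rbrace}\big(\prod_{j\in S}y_j\big)\prod_{j\in S}\sigma_{z_j}$, which turns each covariance into a combination of truncated correlations $\covarBig{\sigma_{z_1}}{\prod_{j\in S}\sigma_{z_j}}$. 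Each such correlation is exactly the left-hand side of~\eqref{eq:isingcorrelations} for the site set $\lbrace z_1\rbrace \cup \lbrace z_j : j\in S\rbrace$, whose minimal distance to $z_1$ is at least $D$; it is therefore bounded by $g(\abs{S}+1)\,e^{-\lambda D}$, which is at most $\big(\max_{2\leq r\leq m} g(r)\big)e^{-\lambda D}$, still polynomial in $m$.

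The delicate and decisive step is the summation over the $2^{m-1}$ configurations $y$. A crude term-by-term bound (triangle inequality, or even Cauchy--Schwarz with the orthogonality of the characters $y\mapsto\prod_{j\in S}y_j$) absorbs only half of the configuration count and leaves a prefactor exponential in $m$; obtaining the claimed polynomial $f(k)$ requires genuinely exploiting the \emph{one-spin-removal} form of~\eqref{eq:isingcorrelations} rather than treating the subsets $S$ independently. I would therefore organize the estimate as a telescoping factorization: iterating~\eqref{eq:isingcorrelations} to peel $\sigma_{z_1}$ from the product one conditioning spin at a time shows that the joint moments factor into the product of marginals with an error that accumulates \emph{linearly} in the number of conditioning spins, each step contributing a single correlation of order $e^{-\lambda D}$. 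Controlling the single-spin-versus-set dependence above through this linear accumulation, rather than through the full total-variation sum, is precisely where the polynomial prefactor must be recovered, and I expect this comparison --- passing from the moment factorization to the $L^1$ quantity without reintroducing the exponential configuration count --- to be the main obstacle. Once a bound $h(m)e^{-\lambda D}$ with $h$ polynomial is in hand, summing the $k-1$ terms and dividing by $k$ yields $\seqsep{\lbrace \sigma_{z_1}, \dots, \sigma_{z_k}\rbrace} \leq f(k)\,e^{-\lambda D}$ with $f$ polynomial, as claimed.
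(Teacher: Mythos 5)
Your route is, in substance, the same as the paper's: reduce $\seqsep{\cdot}$ to the identity ordering, expand the atomic probabilities of the conditioning event through the product $\prod_i(1+a_i\sigma_{z_i})$ (your Walsh expansion is exactly the paper's polynomial $p(\sigma,a)$), bound each resulting truncated correlation $\covar{\sigma_{z_1}}{\prod_{j\in S}\sigma_{z_j}}$ by condition~\eqref{eq:isingcorrelations}, and recombine over outcomes. Your collapsed identity $\alpha(\sigma_{z_1}\,|\,Y)=\frac14\sum_y\absbig{\covarbig{\sigma_{z_1}}{\Ind{Y=y}}}$ is a clean repackaging of the paper's supremum-over-atoms step, and everything up to the recombination is correct.

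The gap is the one you flagged yourself, and you do not close it. After the expansion one has $\alpha(\sigma_{z_1}\,|\,\sigma_{z_2},\dots,\sigma_{z_m})\le\frac14\sum_{\emptyset\neq S\subseteq\{2,\dots,m\}}\absbig{\covarbig{\sigma_{z_1}}{\prod_{j\in S}\sigma_{z_j}}}$, and the hypothesis bounds each of the $2^{m-1}-1$ summands only by $\max_{2\le r\le m}g(r)\,e^{-\lambda D}$, leaving an exponential prefactor. Your proposed repair --- iterating \eqref{eq:isingcorrelations} to factor joint moments with linearly accumulating error --- improves each individual covariance to order $\abs{S}\,e^{-\lambda D}$ at best, but does nothing to reduce the \emph{number} of subsets $S$ (equivalently, of configurations $y$), so the sketch as stated still produces a factor $2^{m-1}$ and no polynomial $h(m)$; you correctly declare this the main obstacle rather than resolving it, so as a proof the proposal is incomplete at the decisive step. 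You should know, however, that your diagnosis cuts at exactly the right place: the paper's own proof performs the identical expansion and, at the corresponding display, simply asserts that the sum of all correlation terms is at most $\frac{1}{2^k}h_1(k)e^{-\lambda D}$ with $h_1$ polynomial, offering no argument for why the $2^{k-1}-1$ terms do not yield an exponential factor. As written, both arguments justify only a prefactor of order $2^{k}$, which can be absorbed into $e^{-\lambda D}$ only when $D$ grows linearly in $k$ --- a regime that does not match the paper's lattice application, where $D=\nu$ is fixed while the group size grows with the lattice. Establishing the polynomial prefactor genuinely requires an additional idea beyond the moment condition and term-by-term bounds, e.g.\ a coupling or total-variation estimate on the influence of a single spin on a distant region of the kind available in high-temperature models.
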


\begin{proof}
We  consider a vector $a \in \lbrace \pm 1 \rbrace^k$ and rewrite the following probability:
\begin{align}\label{eq:isingprob}
&\PP{\sigma_{z_1} = a_1, \dots, \sigma_{z_k} = a_k} 
= \frac{\sum_{\sigma:\sigma|_z = a}e^{-\beta H(\sigma)}}{\sum_{\sigma}e^{-\beta H(\sigma)}} = \frac{\sum_{\sigma}\frac{1}{2^k} p(\sigma, a) e^{-\beta H(\sigma)}}{\sum_{\sigma}e^{-\beta H(\sigma)}},
\intertext{%
where $\sigma|_z=(\sigma_{z_1}, \dots, \sigma_{z_k})$ and 
$p(\sigma, a) = \prod_{i=1}^k (1+a_i\sigma_{z_i})$.
In words, $\frac{1}{2^k} p$ is a polynomial of $\sigma$ that equals $1$ when $\sigma|_z = a$ and that vanishes otherwise.
We can rewrite $p(\sigma, a)$ as $1 + \sum_{i} a_i\sigma_{z_i} + \sum_{i<j} a_i a_j \sigma_{z_i}\sigma_{z_j} + \dots + \prod_{i=1}^ka_i\sigma_{z_i}$.
%
Therefore, we can continue \eqref{eq:isingprob} as:}
%
& = \frac{1}{2^k} ( 1 + \sum_{i} a_i\EE{\sigma_{z_i}} + \sum_{i<j} a_i a_j\EE{\sigma_{z_i}\sigma_{z_j}} + \dots + \Big[\prod_{i=1}^k a_i\Big]\EE{\prod_{i=1}^k\sigma_{z_i}} ).
\end{align}
Using the same argument, we can rewrite $\PP{\sigma_{z_1} = a_1}$ and $\PP{\sigma_{z_2} = a_2, \dots, \sigma_{z_k} = a_k}$ to get
\begin{align}
& \PP{\sigma_{z_1} = a_1}\PP{\sigma_{z_2} = a_2, \dots, \sigma_{z_k} = a_k} \\
& = \frac{1}{2^k}( 1 + \sum_{i} a_i \EE{\sigma_{z_i}} + \sum_{1<i<j} a_i a_j \EE
{\sigma_{z_i}\sigma_{z_j}} + \sum_{1<i} a_1 a_i \EE{\sigma_{z_1}}\EE{\sigma_{z_i}}\\
&  + \dots + \Big[\prod_{i=1}^k a_i\Big] \EE{\sigma_{z_1}} \EE{\prod_{i=2}^k\sigma_{z_i}}).
\end{align}
Combining both expressions together, we get
\begin{align}
& \abs{\PP{\sigma_{z_1} = a_1, \dots, \sigma_{z_k} = a_k} - \PP{\sigma_{z_1} = a_1}\PP{\sigma_{z_2} = a_2, \dots, \sigma_{z_k} = a_k}} \\
& \leq \frac{1}{2^k} ( \sum_{1<i} \abs{\EE{\sigma_{z_1}}\EE{\sigma_{z_i}} - \EE{\sigma_{z_1}\sigma_{z_i}}} + \sum_{1<i<j} \abs{\EE{\sigma_{z_1}}\EE{\sigma_{z_i}\sigma_{z_j}} - \EE{\sigma_{z_1}\sigma_{z_i}\sigma_{z_j}}} \\
& + \dots + \abs{\EE{\sigma_{z_1}} \EE{\prod_{t=2}^k\sigma_{z_t}} - \EE{\prod_{t=1}^k\sigma_{z_t}}}) \leq \frac{1}{2^k}h_1(k)e^{-\lambda D}
\end{align}
for some polynomial $h_1$.

Now we can bound the $\alpha$-separation coefficient.
We start with $\alpha(\sigma_{z_1}|\sigma_{z_2}, \dots, \sigma_{z_k})$.
For any sets $A \subseteq \lbrace \pm 1 \rbrace$ and $B \subseteq \lbrace \pm 1 \rbrace^{k-1}$:
\begin{align}
& \PP{\sigma_{z_1}\in A, (\sigma_{z_2}, \dots, \sigma_{z_k}) \in B} - \PP{\sigma_{z_1}\in A }\PP{ (\sigma_{z_2}, \dots, \sigma_{z_k}) \in B} \\
& = \sum_{a_1\in A, b \in B} \PP{\sigma_{z_1} = a, \sigma_{z_2} = b_1 \dots, \sigma_{z_k} = b_{k-1}} - \PP{\sigma_{z_1} = a}\PP{\sigma_{z_2} = b_1, \dots, \sigma_{z_k} = b_{k-1}} \\
& \leq 2^k \frac{1}{2^k}h(k)e^{-\lambda D} = h_1(k)e^{-\lambda D}.
\end{align}
Hence, the supremum over those sets, $\alpha(\sigma_{z_1}|\sigma_{z_2}, \dots, \sigma_{z_k})$, is also bounded by $h_1(k)e^{-\lambda D}$.
The same argument holds for other $\alpha$-dependencies, so we can conclude that
\begin{equation}
\seqsep{\lbrace \sigma_{z_1}, \dots, \sigma_{z_k} \rbrace} \leq \frac{1}{k}\sum_{i=1}^{k-1} \alpha(\sigma_{z_i}|\sigma_{z_{i+1}}, \dots, \sigma_{z_k}) \leq \frac{1}{k}\sum_{i=1}^{k-1} h_i(k-i+1) e^{-\lambda D} = f(k) e^{-\lambda D}.
\end{equation}
\end{proof}

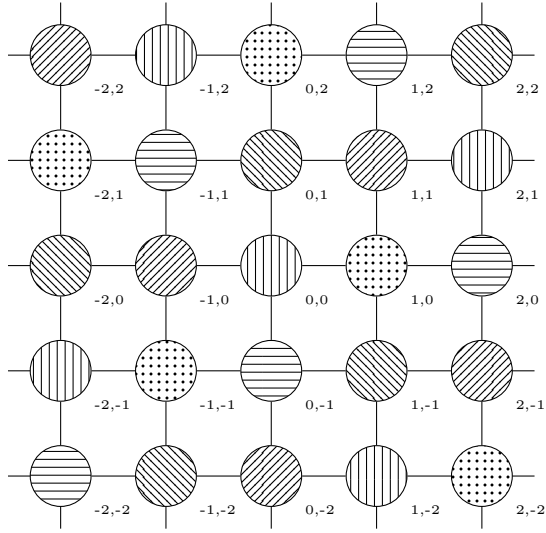
\begin{figure}
\newcommand{\boxnum}{7}
\begin{center}
	\begin{tikzpicture}
	[
		style1/.style={circle,draw,minimum size=23,pattern=north east lines,preaction={fill, white}},
		style2/.style={circle,draw,minimum size=23,pattern=vertical lines,preaction={fill, white}},
		style3/.style={circle,draw,minimum size=23,pattern=dots,preaction={fill, white}},
		style4/.style={circle,draw,minimum size=23,pattern=horizontal lines,preaction={fill, white}},
		style5/.style={circle,draw,minimum size=23,pattern=north west lines,preaction={fill, white}}
	]
		\pgfmathsetmacro{\mult}{1.4}
		\pgfmathsetmacro{\hmult}{\mult/2}
		
		\foreach \x in {0,...,4}
			\draw (\mult*\x, 0)--(\mult*\x, -\hmult) ;
		
		\foreach \x in {0,...,4}
			\draw (\mult*\x, \mult*4)--(\mult*\x, \mult*4+\hmult) ;
					
		\foreach \y in {0,...,4} 
			\draw (0, \mult*\y)--(-\hmult, \mult*\y) ;
			
		\foreach \y in {0,...,4} 
			\draw (\mult*4, \mult*\y)--(\mult*4+\hmult, \mult*\y) ;
			
		\foreach \x in {0,...,4}
			\foreach \y in {0,...,4} {
		   		\pgfmathtruncatemacro{\xx}{\x-2}
		   		\pgfmathtruncatemacro{\yy}{\y-2}
		   		\pgfmathtruncatemacro{\testy}{Mod(\y, 2)}
		   		\pgfmathtruncatemacro{\testx}{Mod(\x, 4)}
		   		\pgfmathtruncatemacro{\start}{Mod(1+2*\y, 5)+1}

		   		\pgfmathtruncatemacro{\st}{Mod(\x-\start, 5)+1}
	   			\node[label={[label distance=1pt]-40:\tiny \xx,\yy},style\st] (\x\y) at (\mult*\x,\mult*\y) {};
			}
		
		\foreach \x in {0,...,4}
			\foreach \y [count=\yi] in {0,...,3}  
				\draw (\x\y)--(\x\yi) (\y\x)--(\yi\x) ;

	\end{tikzpicture}
\end{center}
\caption{A division of a lattice into 5 groups with the distance at least 3 within each group. Each group is represented by a pattern.}
\label{figure:ising}
\end{figure}
As an example of an application of Theorem \ref{theorem:maincovering}, we give a concentration bound for the average magnetization over $\Delta \subseteq \bbZ^2$.
For example, let us take a square set $\Delta$ with side length $L$, meaning that $n = L^2$.
Fix some integer $\nu$ and divide the variables inside the square into $\chi$ groups, $\nG_1, \dots, \nG_\chi$, such that the $\ell_1$ distance between each element inside the group is exactly $\nu$.
The number of such groups, $\chi$, is related to the coloring number of a corresponding distance graph with a set $\lbrace 1, \dots, \nu \rbrace$ over $\bbZ^2$.
For the definitions and overview of the existing bounds on such coloring numbers we refer, for example, to \citep{redl2010distancegraphs}.
An example of a covering is given in the Figure \ref{figure:ising}.
Then, from Lemma \ref{lemma:isingcorrelations}, we get that $\seqsep{G_i} \leq \tilde{g}(\nu)e^{-\lambda \nu}$ for each $i$.
Theorem \ref{theorem:maincovering} then yields a concentration bound on the magnetization inside $\Delta$
\begin{equation}
\PP{\frac{1}{n}\sum_{z\in\Delta} \sigma_z - \EE{\frac{1}{n}\sum_{z\in\Delta} \sigma_z} > t } \leq \e{-2t^2 n /\chi} + f(n)\e{-\lambda \nu},
\end{equation}
for some polynomial $f$.

\subsection{Independent cascade model}
As another example of a setting with small dependencies, we consider an independent cascade model \citep{goldenberg2001talk,goldenberg2001using},
%
as it is commonly used in the study of influence or disease propagation.
%
Consider, for example, a spread of machine failures in a computer network.
Each particular machine can fail at some point in time for reasons that are independent of other machines.
Afterwards, this may cause a failure of the machines that are connected to the first one, for example, if they do a common computation.
Then each of the failed machines can also cause a failure of its neighbors and so on.
The quantities of interest are the final number of failures or the network structures that can minimize the spread of failures.

Mathematically, the model consists of a graph $G$ with vertex set $[n]$, where each vertex $i$ has a binary random variable $Y_i$ associated with it.
The distribution of the variables is defined as a result of the following process.
First, each variable takes a value $1$ ("fires") independently with probability $q$ and $0$ otherwise.
Afterwards, the process proceeds in steps: 
at each step, each variable that fired in the last step has a chance to propagate along each of its outgoing edges to change the value of the neighboring variable.
If the neighboring variable is $1$, then nothing happens.
If it is $0$, then with probability $p$ it changes to $1$.
The process stops when no variable has been switched at the last step.
Then each $Y_i$ is defined to be the final value at node $i$ after the propagation process stops.

In the appendix we prove the following bound on the $\alpha$-separation coefficients for the particular case of a graph $G$, a chain.
\begin{lemma}\label{lemma:cascademixing}
For any index set $\nI \subseteq [n]$ let $d(\nI)$ be the smallest distance in $G$ between any two vertices in $\nI$.
Then for a chain graph $G$, for any set $\nI$ and $0 \leq p < \frac{1}{4}$
\begin{equation}
\seqsep{\nI} \leq |\nI|^2 ( (4p)^{d(\nI)} + 3 p^{d(\nI)}).
\end{equation}
\end{lemma}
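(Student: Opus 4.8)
The plan is to bound the $\alpha$-separation coefficient $\seqsep{\nI}$ by controlling each summand $\alpha(Y_{z_i}\mid Y_{z_{i+1}}, \dots, Y_{z_k})$ in the definition~\eqref{eq:seq-alpha-sep}, where the vertices in $\nI$ are ordered along the chain. The key structural fact I would exploit is that in the independent cascade model on a chain, the value $Y_i$ at a node is determined by three ingredients: the independent ``firing'' events, the independent propagation coin flips along edges, and the resulting spread. For two vertices at distance $d$ along the chain to become dependent, some influence must propagate across the entire gap between them, which requires a chain of $d$ successful propagation events. Since each propagation succeeds with probability $p$ (and firing with probability $q$), the probability that any such cross-gap influence occurs is exponentially small in $d$, of order $p^d$. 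This is the source of the $p^{d(\nI)}$ and $(4p)^{d(\nI)}$ terms.

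\textbf{Main steps.} First I would set up a coupling or a conditioning argument that isolates the events by which the block containing $Y_{z_i}$ can interact with the nodes $z_{i+1}, \dots, z_k$ that lie further along the chain. Because $G$ is a chain, the only way $z_i$ can be correlated with the later vertices is through propagation across the segment separating $z_i$ from its nearest later neighbor, a gap of length at least $d(\nI)$. I would define a ``bad'' event $\nE_i$ that some fire-and-propagate cascade bridges this gap, and show $\PP{\nE_i} \leq c\, p^{d(\nI)}$ for an explicit constant, by summing over the possible starting points of a bridging cascade and bounding each by a product of $d(\nI)$ propagation probabilities. Second, conditioned on the complement $\nE_i^c$, the variable $Y_{z_i}$ depends only on events in its own segment, which are independent of the configuration at $z_{i+1}, \dots, z_k$; hence on $\nE_i^c$ the relevant joint and product probabilities agree. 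Combining via the standard inequality $\abs{\PP{B\cap C} - \PP{B}\PP{C}} \leq 2\PP{\nE_i}$ (or a similar splitting) yields $\alpha(Y_{z_i}\mid \dots) \leq C\,p^{d(\nI)}$ up to the stated polynomial-in-$k$ prefactor. Finally I would sum over $i$ and divide by $k$ as in~\eqref{eq:seq-alpha-sep}, absorbing the number of terms and the combinatorial count of bridging paths into the factor $\abs{\nI}^2$.

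\textbf{Where the two terms come from.} I expect the $(4p)^{d(\nI)}$ contribution to arise from bounding the number of distinct directed bridging cascades across a gap of length $d$: there are on the order of a geometric number of ways (accounting for the two directions and the multiple possible ignition sites within the gap), giving a factor like $4^{d}$ multiplying $p^{d}$, while the cleaner $3p^{d(\nI)}$ term captures the more direct single-path propagation events. Keeping these separate rather than merging them into one exponential is what produces the sum $(4p)^{d(\nI)} + 3p^{d(\nI)}$ in the statement; the constraint $p < \tfrac14$ ensures $4p < 1$ so that the dominant bound still decays geometrically.

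\textbf{Main obstacle.} The hardest part will be the conditional-independence step: making rigorous the claim that, absent a bridging cascade, $Y_{z_i}$ is independent of the later coordinates. The propagation dynamics couple neighboring nodes in both directions and proceed in rounds, so I must carefully argue that the \emph{absence} of a successful bridge across the separating gap truly decouples the two sides, and that conditioning on this absence does not itself introduce subtle correlations. I would handle this by describing the model through its underlying independent randomness (firing indicators and edge-propagation coins) and defining the bridging event purely in terms of the coins on the separating edges, so that conditioning on its complement cleanly factorizes the remaining randomness on the two sides of the gap. The bookkeeping of summing the bridging probabilities over all ignition sites and both propagation directions, and then over all $i$, is the source of the $\abs{\nI}^2$ polynomial factor, and getting these combinatorial counts to match the stated constants is the remaining technical labor.
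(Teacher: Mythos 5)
Your proposal follows essentially the same route as the paper's proof: it represents the cascade through independent firing variables and directional edge coins (the paper's $X_i$, $L_i$, $R_i$ with the recursions $A_i$, $B_i$), reduces the dependence of the leftmost variable to events bridging the gap to its nearest neighbor in $\nI$, bounds the bridging event by counting gap-coin configurations with many successes (the $2^{2(i_2-i_1)}p^{i_2-i_1}=(4p)^{d}$ term) plus the three direct end-to-end crossing events via $E_L,E_R$ (the $3p^{d}$ term), and obtains exact factorization of the remaining randomness on the good event — matching the paper step for step, including your attribution of the two terms. One caution on your "standard inequality" $\abs{\PP{B\cap C}-\PP{B}\PP{C}}\leq 2\PP{\nE_i}$: it does not apply verbatim because the truncated left and right events still share the gap firing variables $X_k$ (a single gap node can fire toward both endpoints), so, exactly as you anticipate in your "main obstacle" paragraph, one must condition on each individual gap-coin outcome with fewer than $d(\nI)$ successes — the paper's sum over $(\omega_L,\omega_R)$ — under which the two events involve disjoint sets of $X_k$'s and cancel term by term.
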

Based on the behaviour of this bound we propose the following conjecture for any graph structure.
\begin{conjecture}\label{conjecture:cascade}
For any graph $G$ and any set $\nI$
\begin{equation}
\seqsep{\nI} \leq C (cp)^{d(\nI)},
\end{equation}
where $c$ is a numerical constant and $C$ depends only on the structure of $G$ and is a polynomial in the size of $\nI$.
\end{conjecture}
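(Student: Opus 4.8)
The plan is to realize the independent cascade model through its classical live-edge representation and then bound each pairwise $\alpha$-dependence by the probability that two backward clusters collide. Recall the standard equivalence for the cascade dynamics: sample, independently for every directed edge $e$, a ``live'' indicator $L_e \sim \text{Bernoulli}(p)$, and independently for every vertex a firing indicator $B_j \sim \text{Bernoulli}(q)$; then $Y_i = \Ind{\exists\, j \text{ with } B_j = 1 \text{ and a live directed path } j \rightsquigarrow i}$. Writing $S_i$ for the random set of vertices possessing a live directed path to $i$ (with $i \in S_i$), this reads $Y_i = \Ind{\exists\, j \in S_i : B_j = 1}$. The first step is to verify that the step-by-step dynamics in the model coincides with this percolation description (each out-edge is tested exactly once, when its tail first fires), so that $Y_i$ becomes a deterministic function of $(L_e)_e$ and $(B_j)_j$.

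Next I would reduce the $\alpha$-separation to cluster collisions. For the identity ordering in \eqref{eq:seq-alpha-sep}, whenever the cluster $S_1$ (together with the edges explored in building it) is disjoint from the clusters of $z_2, \dots, z_k$, the variable $Y_{z_1}$ is a function of randomness independent of $(Y_{z_2}, \dots, Y_{z_k})$. A standard disagreement/coupling argument then gives
\begin{equation}
\alpha(Y_{z_1} \mid Y_{z_2}, \dots, Y_{z_k}) \leq \PPb{S_1 \cap S_j \neq \emptyset \text{ for some } j \geq 2} \leq \sum_{j \geq 2} \PPb{S_1 \cap S_j \neq \emptyset}.
\end{equation}
The same bound applies to every term of \eqref{eq:seq-alpha-sep}, so it suffices to control the two-cluster collision probability $\PPb{S_1 \cap S_j \neq \emptyset}$ for vertices at distance at least $d(\nI)$.

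The core is a two-point estimate. If $w \in S_1 \cap S_j$, there exist live directed paths $w \rightsquigarrow z_1$ and $w \rightsquigarrow z_j$ of lengths $\ell_1, \ell_j$, whose concatenation joins $z_1$ and $z_j$ in the underlying graph, so $\ell_1 + \ell_j \geq \text{dist}(z_1, z_j) \geq d(\nI)$. A union bound over the common ancestor and the two paths gives
\begin{equation}
\PPb{S_1 \cap S_j \neq \emptyset} \leq \sum_{\ell_1 + \ell_j \geq d(\nI)} N_{\ell_1} N_{\ell_j}\, p^{\ell_1 + \ell_j},
\end{equation}
where $N_\ell$ counts directed walks of length $\ell$ ending at a fixed vertex. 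Bounding $N_\ell$ by the walk-growth rate of $G$ (crudely $\Delta^\ell$ for maximum degree $\Delta$, more sharply by the connective constant) makes the right-hand side a geometric series of the form $\text{const}\cdot(cp)^{d(\nI)}$ whenever $cp < 1$. Summing over $j$ and over the $k-1$ terms of the $\alpha$-separation produces a factor $C = \text{poly}(|\nI|)$, yielding the claimed $\seqsep{\nI} \leq C\,(cp)^{d(\nI)}$.

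The main obstacle is the base constant $c$. The walk count $N_\ell$ sets the effective base, and the naive estimate gives $c = \Delta$, which is graph-dependent, whereas the conjecture asks for a numerical $c$ with all structural information absorbed into $C = \text{poly}(|\nI|)$. Reconciling these for arbitrary graphs is exactly where the difficulty lies: either one restricts to bounded-degree graphs and treats $\Delta$ as part of the structural constant, or one replaces the walk count by a self-avoiding/connective-constant estimate and works below the percolation threshold of $G$, so that the subcritical two-point function decays genuinely exponentially in the distance. For high-degree graphs no fixed numerical $c$ can dominate the growth rate times $p$ across a $\gamma$-independent range of $p$, so the clean statement most plausibly holds only once $p$ lies below a graph-dependent threshold. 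Making the admissible $p$-range and the dependence of $C$ on the geometry precise, while keeping $c$ numerical, is the crux that separates the solvable chain case (Lemma~\ref{lemma:cascademixing}) from the general conjecture.
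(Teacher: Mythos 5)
You should note first that the statement you were asked to prove is stated in the paper as a \emph{conjecture}: the paper offers no proof of it, only evidence in the form of the chain case (Lemma~\ref{lemma:cascademixing}). Measured against that, your program is in fact the natural generalization of the paper's own chain argument: the auxiliary variables $L_i$, $R_i$, $A_i$, $B_i$ in the paper's proof are exactly a live-edge representation specialized to the line, and the enumeration over assignments $(\omega_L,\omega_R)$ producing the factor $2^{2(i_2-i_1)}p^{\,i_2-i_1}$ (whence the base $4p$ and the restriction $p<\tfrac14$) is precisely your walk count $N_\ell$ in the only case where it is benign, namely the chain, whose growth constant is $2$ per direction. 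So methodologically you are aligned with the only proof the paper has; but what you have written is a proof sketch with acknowledged holes, not a proof, and it could not be otherwise since the target is open.

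The concrete gaps are three. First, the step ``if $S_1$ is disjoint from the other clusters then $Y_{z_1}$ is independent of $(Y_{z_2},\dots,Y_{z_k})$'' is not a valid conditional-independence statement: the disjointness event is measurable with respect to the same edge and firing variables, and conditioning on it creates dependence. The correct route is an exploration coupling — reveal the backward cluster of $z_1$ (all edges with head in $S_1$), build an independent copy of the remaining variables from fresh randomness, and bound $\alpha$ by the disagreement probability; this does work for \emph{directed} backward clusters, since on the disjointness event the two explorations query disjoint edge sets, but it has to be written out and it is where the paper's chain proof spends most of its effort (the cancellation of identical factors in \eqref{eq:theverylastdecomposition}). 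Second, your two-point bound $\PPb{S_1\cap S_j\neq\emptyset}\leq\sum_{\ell_1+\ell_j\geq d(\nI)}N_{\ell_1}N_{\ell_j}p^{\ell_1+\ell_j}$ multiplies probabilities of two live paths that may share edges and hence are not independent; you need a last-common-vertex reduction (making the two path segments edge-disjoint) or a BK-type inequality before the product bound is legitimate. Third, and decisively — as you yourself diagnose — the walk count forces the base $c$ to be the growth rate of $G$ (crudely $\Delta$, at best the connective constant), and since $C$ in the conjecture may depend on $G$ and $|\nI|$ but \emph{not} on $d(\nI)$, a factor $\Delta^{d(\nI)}$ cannot be absorbed into $C$. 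Your argument, even if steps one and two are repaired, therefore proves a modified statement (bounded-degree graphs with $c=c(\Delta)$, or $p$ below a graph-dependent subcriticality threshold where the two-point function decays at rate controlled by $p$ alone), not the conjecture with a universal numerical $c$. That tension is real — on high-degree lattices at fixed $p<\tfrac14$ the number of geodesics already makes $(4p)^{d(\nI)}$ too small a bound for the connection probability — and flagging it is the most valuable part of your write-up; but it means the proposal should be read as an obstruction analysis and a proof of a weaker variant, not as a resolution of Conjecture~\ref{conjecture:cascade}.
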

The dependency graph $\nH$ of $Y_i$'s is a complete graph.
However, assuming the conjecture is true, for the thresholded graph $H_{\gamma}$ for $\gamma = Cp^d$ for some integer $d$ have only edges between variables that have distance less than $d$ in the graph $G$ (similarly to the example in the Section \ref{sec:softcovers}, but now for arbitrary graphs).
For a fixed $d$, let $\chi_d$ be the coloring number of $H_\gamma$ with $\gamma = C(cp)^d$.
Then Theorem \ref{theorem:maincovering} would allow us the following concentration bound on the average number of fired events in the independent cascade model.
For any integer $d$:
\begin{equation}
\PP{\frac{1}{n}\sum_{i=1}^{n}(Y_i - \EE{Y_i}) > t} \leq \e{-2t^2n/\chi_d} + C n (cp)^d.
\end{equation}

\bibliographystyle{imsart-number}
\bibliography{biblio}

\section{Appendix}
In the proof of Theorem \ref{theorem:maincovering} we use the following theorem from \citep{Bradley02}, which allows to construct a tight copy of a single variable.
\begin{theorem}[Theorem 3 from \citep{Bradley02}]\label{theorem:bradley}
Suppose $X \in R^d$ and $Y \in R$ are two random variables.
Suppose $\lambda > 0$ and $1\leq q \leq \infty$, such that $\lambda \leq \norm{Y}_q < \infty$. Then there exists a real-valued random variable $Y^\star$, such that
\begin{enumerate}
\item $Y^\star$ is independent of $X$,
\item $Y^\star$ and $Y$ has the same distribution, and
\item $\PP{\abs{Y - Y^\star} \geq \lambda} \leq 18\left( \frac{\norm{Y}_q}{\lambda}\right)^{q/(2q+1)}\left( \alpha( X | Y ) \right)^{2q/(2q+1)}$.
\end{enumerate}
\end{theorem}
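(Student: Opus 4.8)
The plan is to build $Y^\star$ by a \emph{conditional quantile} (probability–integral–transform) coupling that respects the order structure of $\nR$, controlling its accuracy through the observation that $\alpha(X\mid Y)$ bounds how far the conditional distribution function of $Y$ given $X$ deviates from its marginal counterpart. First I would reduce to the bounded case: fix a truncation level $M$, replace $Y$ by $Y_M=\max(-M,\min(M,Y))$, and absorb the event $\{\abs{Y}>M\}$, whose probability is at most $\norm{Y}_q^q/M^q$ by Markov, into the final bound. It then suffices to produce $Y_M^\star$ with the three desired properties, since $\abs{Y-Y_M}$ is supported on a set of small probability.

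For the core construction, let $F$ be the marginal distribution function of $Y_M$ and let $G(x)=\mathbb{P}(Y_M\le x\mid X)$ be the conditional one, a $\sigma(X)$-measurable random function. Introduce $U=G(Y_M^-)+V\,(G(Y_M)-G(Y_M^-))$ with $V$ an independent uniform variable; the conditional integral transform makes $U$ uniform on $[0,1]$ and, crucially, independent of $X$, while $Y_M=G^{-1}(U)$. I then set $Y_M^\star=F^{-1}(U)$. By construction $Y_M^\star$ has law $F$ and is a function of $U$ alone, hence independent of $X$, giving properties (1) and (2). The remaining task is the tail bound $\mathbb{P}(\abs{G^{-1}(U)-F^{-1}(U)}\ge\lambda)$.

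Next I would turn $\alpha$ into control of $\abs{G-F}$. Testing the defining supremum of $\alpha(X\mid Y)$ against the half-line $\{Y\le x\}$ and the $\sigma(X)$-event $\{G(x)\ge F(x)\}$ (and its complement) yields $\EE{\abs{G(x)-F(x)}}\le 2\alpha(X\mid Y)$ for each fixed threshold $x$. To pass from these threshold-wise estimates to the quantile gap $\abs{G^{-1}(U)-F^{-1}(U)}$, I would lay down a grid of thresholds at spacing $\delta$ over $[-M,M]$ and bound the event $\{G^{-1}(U)-F^{-1}(U)\ge\lambda\}$ by disagreements of $G$ and $F$ across grid cells. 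A crude union bound over the $\sim M/\delta$ thresholds gives only an $(M/\delta)\,\alpha(X\mid Y)$ term plus rounding of order $\delta$, which is linear in $\alpha$ and too weak; recovering the correct scaling requires a finer aggregation — a Cauchy–Schwarz-type estimate across thresholds, or a quantile-integral representation — exploiting that the measure of the "far" event at each level is itself bounded by the conditional–marginal gap there.

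Finally I would optimize. The bound has three competing contributions — truncation $\norm{Y}_q^q/M^q$, grid rounding, and the aggregated $\alpha$-error — and choosing $M$ and $\delta$ to balance them is exactly what produces the exponents $q/(2q+1)$ on $\norm{Y}_q/\lambda$ and $2q/(2q+1)$ on $\alpha(X\mid Y)$; the limiting case $q=\infty$ recovers the $\sqrt{\norm{Y}_\infty/\lambda}\,\alpha$ form used in \eqref{eq:copybound}. I expect the main obstacle to be precisely this conversion together with the optimization: the coefficient $\alpha$ is weak, controlling only deviations of probabilities of individual events rather than a full coupling, so unlike the absolutely-regular ($\beta$-mixing) situation — where Berbee's lemma couples to exact equality — we cannot hope for $Y=Y^\star$ and must exploit the order of $\nR$ to settle for closeness. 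Carrying out the aggregation over thresholds without losing the square-root scaling, and then tuning the two free scales to land on the stated exponents and the explicit constant $18$, is the delicate heart of the argument.
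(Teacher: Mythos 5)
You should first note what you are proving against: the paper does not prove this statement at all --- it is imported verbatim as Theorem 3 of \citep{Bradley02}, and the hard analytic content lives in Bradley's paper. So the comparison is with Bradley's original argument, and against that benchmark your sketch has a genuine gap exactly where you yourself locate ``the delicate heart.'' Your construction of $Y^\star$ via the smoothed conditional probability-integral transform ($U = G(Y^-)+V(G(Y)-G(Y^-))$, $Y^\star=F^{-1}(U)$) is sound and does deliver properties (1) and (2); it is essentially the device used by Rio for his related coupling lemma, which bounds $\EE{\abs{Y-Y^\star}}$ by an integral of the quantile function over $[0,2\alpha]$. Likewise your per-threshold estimate $\EE{\abs{G(x)-F(x)}}\leq 2\alpha(X|Y)$ is correct. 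But property (3) is never established: the aggregation from threshold-wise control to the quantile gap is precisely what produces the exponents $q/(2q+1)$ and $2q/(2q+1)$ and the constant $18$, and you offer only candidate techniques (``a Cauchy--Schwarz-type estimate \ldots or a quantile-integral representation'') without carrying any of them out. Worse, the arguments you do gesture at provably land on \emph{weaker} bounds. The grid union bound gives roughly $(M/\lambda)\alpha$ plus truncation error $(\norm{Y}_q/M)^q$, which after optimizing $M$ yields exponent $q/(q+1)$ on $\alpha$, strictly smaller than $2q/(2q+1)$. And note the diagnosis ``linear in $\alpha$ is too weak'' is off: at $q=\infty$ Bradley's bound \emph{is} linear in $\alpha$, namely $18\sqrt{\norm{Y}_\infty/\lambda}\,\alpha$; the deficiency of the union bound is the prefactor $M/\lambda$ in place of $\sqrt{M/\lambda}$. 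Meanwhile, Cauchy--Schwarz aggregations of the kind you float (using $\EE{\abs{\Delta_j}}\leq\min(2\alpha,2p_j)$ across $N$ cells) typically produce $\sqrt{N\alpha}$, i.e.\ $\sqrt{\alpha}$ scaling --- also not Bradley's $\sqrt{N}\,\alpha$. Neither proposed repair obviously reaches the stated inequality, so what you have is a correct coupling plus a proof of a strictly weaker tail bound.

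There is also a secondary, repairable flaw in your reduction: truncating first and building a copy of $Y_M$ violates property (2), since $Y^\star$ must have the law of $Y$, not of $Y_M$. The fix is to run your transform with the \emph{untruncated} conditional and marginal distribution functions, so that $Y^\star=F^{-1}(U)$ has exactly the law of $Y$, and to use truncation only inside the analysis of $\PP{\abs{Y-Y^\star}\geq\lambda}$, paying the Markov term $\PP{\abs{Y}>M}\leq(\norm{Y}_q/M)^q$ twice (once for $Y$, once for $Y^\star$, which has the same law). With that amendment your scheme is a clean route to a Bradley-\emph{type} theorem with exponent $q/(q+1)$; but matching the actual exponents $q/(2q+1)$, $2q/(2q+1)$ and the constant $18$ requires the finer aggregation that Bradley's several-page construction supplies and your proposal leaves open. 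For the purposes of this paper the weaker exponents would in fact suffice qualitatively (the proof of Theorem~\ref{theorem:core} only needs the $q=\infty$ form up to constants and powers of $\alpha$), but as a proof of the quoted statement the attempt is incomplete.
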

\begin{proof}[Proof of Theorem \ref{theorem:lowerbound}]
We construct the distribution by directly assigning probabilities to each elementary outcome.
Let $\omega \in \left\lbrace 0 , 1 \right\rbrace^n$ and $X = (X_1, \dots, X_n)$.
Then for a fixed $\varepsilon \in [0, 1]$ (to be set later) we define
\begin{equation}
\PP{X = \omega} = \frac{1}{2^n} + s(\omega)\frac{\varepsilon}{2^n},
\end{equation}
where we define $s(w) = 1$ if $\abs{\frac{n}{2} + t - \sum_{i=1}^n \omega_i}$ is even and $s(\omega) = -1$ otherwise.
First, we need to check that it is a valid probability distribution.
For each $\omega$, $\PP{X = \omega} \in [0, 1]$, because $\varepsilon \in [0, 1]$, and 
\begin{align}
\sum_{\omega} \PP{X = \omega} & = \sum_{k=0}^{n} {n \choose k} \left(\frac{1}{2^n} + (-1)^k \frac{\varepsilon}{2^n} \right) \\
& = 1 + \frac{\varepsilon}{2^n} \sum_{k=0}^{n} {n \choose k} (-1)^k = 1,
\end{align}
where the last equality follows from the identity $\sum_{k=0}^{n} {n \choose k} (-1)^k = 0$.

In addition, we need to verify the marginal distributions of $X_i$'s.
Introduce $G_k = \left\lbrace \omega \in \left\lbrace 0 , 1 \right\rbrace^n | \sum_{i=1}^{n} \omega_i = k \text{ or } \sum_{i=1}^{n} \omega_i = n-k \right\rbrace$.
We observe that if we take all $\omega$ within $G_k$, then there is the same number of zeros and ones in the $i$'th coordinate, meaning that the events $\left\lbrace X_i = 0 \right\rbrace$ and $\left\lbrace X_i = 1 \right\rbrace$ get the same probability mass within each $G_k$, because each $\omega$ within $G_k$ has the same probability.
Since the sets $G_k, 0 \leq k \leq \frac{n}{2}$, form a partition of $ \left\lbrace 0 , 1 \right\rbrace^n $, the events $\left\lbrace X_i = 0 \right\rbrace$ and $\left\lbrace X_i = 1 \right\rbrace$ get the same probability mass, that is $\frac{1}{2}$, on the whole space.

Moreover, each set of $X_i$'s of size less than $n$ is independent.
Because of symmetry, we establish this fact for $X_k, \dots, X_n$ for some $k > 1$.
For any $\omega\in\lbrace 0, 1\rbrace^{n-1}$
\begin{align}
\PP{(X_2, \dots, X_n) = \omega} & = \PP{X = (0, \omega)} + \PP{X = (1, \omega)} \\
& = \frac{1}{2^{n-1}} + \frac{\varepsilon}{2^n}(s(0, \omega) + s(1, \omega)) \\
& = \frac{1}{2^{n-1}} = \prod_{i=2}^n \PP{X_i = \omega_i},
\end{align}
because $s(0, \omega) = - s(1, \omega)$ by definition of the function $s$.

Our next step is to compute the sequential $\alpha$-separation coefficient for the defined distribution.
Since our distribution is completely symmetric, the order does not matter and we can consider the natural ordering.
First, we observe that $\alpha(X_k|X_{k+1},\dots,X_n) = 0$ for $k > 1$, because of the independence property proven above.
Recall that
\begin{align}
& \alpha(X_k|X_{k+1},\dots,X_n) \\
& = \sup_{\subalign{ & B_k\subseteq \binary \\ & B\subseteq \binary^{n-k} }} \abs{ \PP{X_k \in B_k, (X_{k+1},\dots,X_n) \in B} - \PP{X_k \in B_k}\PP{(X_{k+1},\dots,X_n) \in B} }.
\end{align}
For fixed $B_k, B$, we can rewrite the probabilities inside the absolute value as
\begin{align}
\sum_{\omega_k\in B_k, \omega\in B}( \PP{(X_k, \dots,X_n) = (\omega_k,\omega)} - \PP{X_k = \omega_k}\PP{(X_{k+1},\dots,X_n) = \omega}) = 0.
\end{align}

We can also compute $\alpha(X_1|X_{2},\dots,X_n)$:
\begin{align}
& \alpha(X_1|X_{2},\dots,X_n) \\ & = \sup_{B_1\subseteq \binary, B\subseteq \binary^{n-1}} \abs{ \sum_{\omega_1\in B_1, \omega\in B}( \PP{X = (\omega_1,\omega)} - \PP{X_1 = \omega_1}\PP{(X_{2},\dots,X_n) = \omega}) } \\
& = \sup_{B_1\subseteq \binary, B\subseteq \binary^{n-1}} \abs{ \sum_{\omega_1\in B_1, \omega\in B}( \frac{1}{2^n} + s(\omega_1, \omega)\frac{\varepsilon}{2^n} - \frac{1}{2^n} )} \\
& = \frac{\varepsilon}{2^n} \sup_{B_1\subseteq \binary, B\subseteq \binary^{n-1}} \abs{ \sum_{\omega_1\in B_1, \omega\in B} s(\omega_1, \omega) }.
\end{align}
Now we need to compute the last supremum.
It can not be achieved for $B_1 = \emptyset$ and $B_1 = \binary$, since the expression under the supremum equals to 0 in those cases.
Then we have only two cases left, $B_1 = \lbrace 0 \rbrace$ and $B_1 = \lbrace 1 \rbrace$, and they achieve the same value as $s(0,\omega) = -s(1,\omega)$.
Therefore, we are left to compute
\begin{equation}
\sup_{B\subseteq \binary^{n-1}} \abs{ \sum_{\omega\in B} s(\omega) }.
\end{equation}
To achieve the supremum, the set $B$ needs to contain all $\omega$ with the property that $\sum_{i=1}^{n-1}\omega_i$ have the same parity (so that we include only the summands with the same sign).
Again, it does not matter which parity we choose, because
\begin{equation}
\sum_{i \text{ odd}} \binom{n-1}{i} = \sum_{i \text{ even}} \binom{n-1}{i} = 2^{n-2}.
\end{equation}
The final expression for the $\alpha$-dependence of our variables is $\seqsep{\nA} = \frac{\varepsilon}{4n}$ and we can set $\varepsilon = 4n\gamma$.

Next, we turn to the lower bound,
\begin{align}
\PP{\bar{X} \geq \frac{n}{2} + t} & = \sum_{k=\frac{n}{2} + t}^{n}\PP{\bar{X} = k} \\
& = \sum_{k=\frac{n}{2} + t}^{n} \sum_{\omega: \sum_{i}\omega_i = k} \PP{X = \omega} \\
& = \frac{1}{2^n} \sum_{k=\frac{n}{2} + t}^{n} {n \choose k} + \frac{\varepsilon}{2^n} \sum_{k=\frac{n}{2} + t}^{n} {n \choose k} (-1)^{\Ind{\abs{\frac{n}{2} + t - k} \text{ is odd}}} \\
& = \frac{1}{2^n} \sum_{k=\frac{n}{2} + t}^{n} {n \choose k} + \frac{\varepsilon}{2^n} {n-1 \choose \frac{n}{2} + t - 1}, \label{eq:decomp-lower}
\end{align}
where in the last line we used the fact that $\sum_{k=0}^{n} {n \choose k} (-1)^k = 0$ and $\sum_{k=0}^{m} {n \choose k} (-1)^k = (-1)^m {n-1 \choose m}$.
Now the first term in (\ref{eq:decomp-lower}) can be lower-bounded as in the proof of Proposition 7.3.2 in \citep{Matousek01}, which gives us the final result:
\begin{equation}
\PP{\bar{X} \geq \frac{n}{2} + t} \geq \frac{1}{15}e^{-16t^2/n} + \frac{4n\seqsep{\nA}}{2^n} {n-1 \choose \frac{n}{2} + t - 1}.
\end{equation}
\end{proof}

\begin{proof}[Proof of Lemma \ref{lemma:cascademixing}]
Let us first fix our notations.
Each vertex $i$ of $G$ is connected to its neighbors $i-1$ and $i+1$, except for $1$ and $n$, which are connected only to $2$ and $n-1$ respectively.
Let $X_i$ be the state of the variables after the initial draw.
Also, for each vertex $i$ define two random variables $L_i$ and $R_i$, which control the propagation, \ie each $L_i$ and $R_i$ take value $1$ with probability $p$ and zero otherwise.
In addition, recursively define axillary variables $A_i = X_i + L_{i+1}A_{i+1}$ with $L_{n+1} = 0$ and $A_{n+1} = 0$.
Similarly, let $B_i = X_i + R_{i-1}B_{i-1}$ with $R_0 = 0$ and $B_0 = 0$.
Then we define the final state variables $Y_i$ as $\min\lbrace 1, A_i + B_i \rbrace$.

Denote the elements of $\nI$ as $i_1, \dots, i_k$, with the indices being sorted in the ascending order.
To prove the lemma it is enough to prove the bound for some order of indices and we will go from left to right, \ie we focus on $\seqsep{Y_{i_1}|\lbrace Y_{i_2}, \dots, Y_{i_k} \rbrace}$.
Moreover, it is enough to bound 
\begin{equation}
\abs{\PP{Y_{i_1} = 0, \dots, Y_{i_k} = 0} - \PP{Y_{i_1} = 0}\PP{Y_{i_2} = 0, \dots, Y_{i_k} = 0}}.
\end{equation}

For this we rewrite the events for any $i$
\begin{align}
\lbrace Y_i = 0 \rbrace &  = \lbrace A_i = 0, B_i = 0 \rbrace \\
& = \lbrace X_i = 0, L_{i+1}A_{i+1} = 0, R_{i-1}B_{i-1} = 0 \rbrace,
\end{align}
%
A useful observation is that we can rewrite some of the events as follows, for any $i, j$,
\begin{align}
\lbrace L_{i+1}A_{i+1} = 0, L_{j+1}A_{j+1} = 0 \rbrace = \lbrace \forall i+1 \leq k \leq j-1, X_k \prod_{t=1}^{k-i}L_{i+t} = 0, L_{j+1}A_{j+1} = 0 \rbrace.
\end{align}
The same can be done for $\lbrace R_{i-1}B_{i-1} = 0, R_{j-1}B_{j-1} = 0 \rbrace$.
Let $C_1 = \lbrace \forall i_1+1 \leq k \leq i_2-1, X_k \prod_{t=1}^{k-i_1}L_{i_1+t} = 0 \rbrace$ and $C_2 = \lbrace \forall i_1+1 \leq k \leq i_2-1, X_k \prod_{t=1}^{i_2-k}R_{i_2-t} = 0 \rbrace$.
Then
\begin{align}
& \PP{Y_{i_1} = 0, \dots, Y_{i_k} = 0} \\
& = \PP{ X_{i_1} = 0, X_{i_2} = 0, C_1, C_2, R_{i_1-1}B_{i_1-1} = 0, L_{i_2+1}A_{i_2+1} = 0, Y_{i_3} = 0, \dots, Y_{i_k} = 0 }. \notag
\end{align}
Let us introduce another version of $B_i$'s as $\tilde{B}_i = X_i + R_{i-1}\tilde{B}_{i-1}$ for $i \geq i_2$ with $\tilde{B}_{i_2} = 0$.
Using those we can define $\tilde{Y}_i = \min\lbrace 1, A_i + \tilde{B}_i \rbrace$ and observe that
\begin{equation}
\lbrace R_{i_2-1}B_{i_2-1} = 0, Y_{i_3} = 0, \dots, Y_{i_k} = 0 \rbrace = \lbrace R_{i_2-1}B_{i_2-1} = 0, \tilde{Y}_{i_3} = 0, \dots, \tilde{Y}_{i_k} = 0 \rbrace,
\end{equation}
which is very convenient, because $\tilde{Y}_i$'s are independent of all random variables with the indices less than $i_2$.

Using all above we can decompose the probabilities as follows.
\begin{align}
& \PP{Y_{i_1} = 0, \dots, Y_{i_k} = 0} = \PP{ X_{i_1} = 0}\PP{ X_{i_2} = 0}\PP{R_{i_1-1}B_{i_1-1} = 0} \PP{C_1, C_2} \\
& \ \ \times \PP{ L_{i_2+1}A_{i_2+1} = 0, \tilde{Y}_{i_3} = 0, \dots, \tilde{Y}_{i_k} = 0 }. \notag \\
& \PP{Y_{i_1} = 0} = \PP{ X_{i_1} = 0}\PP{ L_{i_1+1}A_{i_1+1} = 0}\PP{R_{i_1-1}B_{i_1-1} = 0}. \\
& \PP{Y_{i_2} = 0, \dots, Y_{i_k} = 0} \\
& = \PP{ X_{i_2} = 0}\PP{R_{i_2-1}B_{i_2-1} = 0} \PP{ L_{i_2+1}A_{i_2+1} = 0, \tilde{Y}_{i_3} = 0, \dots, \tilde{Y}_{i_k} = 0 }. \notag
\end{align}
Then we have the following inequality.
\begin{align}
& \abs{\PP{Y_{i_1} = 0, \dots, Y_{i_k} = 0} - \PP{Y_{i_1} = 0}\PP{Y_{i_2} = 0, \dots, Y_{i_k} = 0}} \\
& \leq \abs{ \PP{C_1, C_2} - \PP{L_{i_1+1}A_{i_1+1} = 0}\PP{R_{i_2-1}B_{i_2-1} = 0} }.
\end{align}

Further, introduce $E_L = \prod_{t=1}^{i_2-i_1} L_{i_1+t}$ and $E_R = \prod_{t=1}^{i_2-i_1}R_{i_2-t}$ and observe that all $\PP{E_L = 0, E_R = 1}$, $\PP{E_L = 1, E_R = 0}$ and $\PP{E_L = 1, E_R = 1}$ are bounded by $p^{i_2-i_1}$.
And we can further upper bound
\begin{align}
& \leq \abs{ \PP{C_1, C_2} - \PP{L_{i_1+1}A_{i_1+1} = 0}\PP{R_{i_2-1}B_{i_2-1} = 0} } \\
& \leq \abs{ \PP{C_1, C_2, E_L = 0, E_R = 0} - \PP{L_{i_1+1}A_{i_1+1} = 0, E_L = 0}\PP{R_{i_2-1}B_{i_2-1} = 0, E_R = 0} } \notag\\
& + 3p^{i_2-i_1}, \label{eq:lastline}
\end{align}
where we decomposed all of the probabilities over the events with $E_L$ and $E_R$ and used the above-mentioned upper bounds on the probability for some of them.
Now we introduce the elementary outcomes $\omega_L, \omega_R \in \lbrace 0, 1 \rbrace^{i_2-i_1}$, where $\omega_L$ is an assignment of $L = (L_{i_1+1}, \dots, L_{i_2})$ and $\omega_R$ is an assignment of $R = (R_{i_1}, \dots, R_{i_2-1})$.
Also, denote by $\abs{(\omega_L,\omega_R)}$ a number of ones on the both vectors, \ie $\sum_{k=1}^{i_2-i_1}\omega_L(k) + \sum_{k=1}^{i_2-i_1}\omega_R(k)$.
Then we can continue bounding \eqref{eq:lastline} by
\begin{align}
& | \sum_{\omega_L, \omega_R} ( \PP{C_1, C_2, E_L = 0, E_R = 0, L = \omega_L, R = \omega_R} \\
& - \PP{L_{i_1+1}A_{i_1+1} = 0, E_L = 0, L = \omega_L}\PP{R_{i_2-1}B_{i_2-1} = 0, E_R = 0, R = \omega_R} ) | + 3p^{i_2-i_1} \notag \\
& \leq | \sum_{\stackrel{\omega_L, \omega_R}{\abs{(\omega_L, \omega_R)} < j-i} } ( \PP{C_1, C_2, E_L = 0, E_R = 0, L = \omega_L, R = \omega_R} \label{eq:theverylastdecomposition} \\
& - \PP{L_{i_1+1}A_{i_1+1} = 0, E_L = 0, L = \omega_L}\PP{R_{i_2-1}B_{i_2-1} = 0, E_R = 0, R = \omega_R} ) | \\
& + 2^{2(i_2-i_1)}p^{i_2-i_1} + 3p^{i_2-i_1}, \notag
\end{align}
where the last line follows from the fact that if $\abs{(\omega_L, \omega_R)} < i_2-i_1$, then $\PP{L = \omega_L, R = \omega_R} \leq p^{i_2-i_1}$.
Now let us take a closer look at the event $C_1 = \cap_{k=i+1}^{j-1} D_k$ with $D_k = \lbrace X_k \prod_{t=1}^{k-i_1}L_{i_1+t} = 0 \rbrace$.
When $L = \omega_L$, then either $\prod_{t=1}^{k-i_1}L_{i_1+t} = 0$ (and $D_k$ can be excluded from the intersection) or $\prod_{t=1}^{k-i_1}L_{i_1+t} = 1$, in which case $D_k$ can be replaced with $\lbrace X_k = 0 \rbrace$. Moreover, in the second case, since $\abs{(\omega_L, \omega_R)} < j-i$, we know for sure that $\prod_{t=1}^{i_2-k}R_{i_2-t} = 0$, meaning that the corresponding event can be excluded from $C_2$.
The same considerations apply to $C_2$.
Therefore, for given $(\omega_L, \omega_R)$ with $\abs{(\omega_L, \omega_R)} < i_2-i_1$, each $C_1$ and $C_2$ decompose into intersection of events $\lbrace X_k = 0 \rbrace$ with no index $k$ repeated twice.
Then it follows that both probabilities in \eqref{eq:theverylastdecomposition} decompose into the product of the same multiples and cancel each other out.

From this we can conclude that $\seqsep{Y_{i_1}|\lbrace Y_{i_2}, \dots, Y_{i_k} \rbrace} \leq k((4p)^{i_2-i_1} + 3p^{i_2-i_1}) \leq k((4p)^{d(\nI)} + 3p^{d(\nI)})$ and, consequently, $\alpha(\nI) \leq k^2((4p)^{d(\nI)} + 3p^{d(\nI)})$.

\end{proof}

\end{document}